\numberwithin{equation}{section}
\newtheorem{theorem}{Theorem}[section]
\newtheorem{corollary}[theorem]{Corollary}
\newtheorem{lemma}[theorem]{Lemma}
\newtheorem{proposition}[theorem]{Proposition}
\newcommand{\la}{\langle}
\newcommand{\ra}{\rangle}
\renewcommand{\O}{{\mathcal{O}}}
\newcommand{\R}{{\mathbb{R}}}
\newcommand{\cd}{{\,\cdot\,}}
\newcommand{\ang}{{\not\negmedspace\nabla}}
\newcommand{\good}{{\not\negmedspace\partial}}
\renewcommand{\S}{{\mathbb{S}}}
\newcommand{\tC}{{\tilde{C}}}
\newcommand{\ttC}{{\tilde{\tilde{C}}}}
\begin{document}
\bibliographystyle{plain}

\title
{
On a system of weakly null semilinear wave equations
}

\author{Jason Metcalfe}
\author{Alexander Stewart}
\address{Department of Mathematics, University of North Carolina, Chapel Hill}
\email{metcalfe@email.unc.edu, stew314@live.unc.edu}

\keywords{wave equations, local energy estimates, weak null condition,
  global existence}

\thanks{The first author was supported in part by Simons Foundation
  Collaboration Grant 711724, and both authors received support from
  NSF grant DMS-2054910 (PI Metcalfe).  The results contained herein
  were developed as a part of the Undergraduate Honors Thesis of AS}

 \begin{abstract}
We develop a new method for addressing certain weakly null systems of
wave equations.  This approach does not rely
on Lorentz invariance nor on the use of null foliations, both of which
restrict applications to, e.g., multiple speed systems.  The proof
uses a class of space-time Klainerman-Sobolev estimates of the first
author, Tataru, and Tohaneanu, which pair
nicely with local energy estimates that combine the $r^p$-weighted
method of Dafermos and Rodnianski with the ghost weight method of
Alinhac.  We further refine the standard local energy estimate with a
modification of the $\partial_t-\partial_r$ portion of the multiplier.
 \end{abstract}

%%% ----------------------------------------------------------------------
\maketitle
%\tableofcontents
%%% ----------------------------------------------------------------------
%%%%%%%%%%%%%%%%%%%%%%%%%%%%%%%%%
%%%-----------------------------------------------------------------------

%%%%%%%%%%%%%%%%%%%%%%%%%%%%%%%%%%%%%%%%%%%%%%%%%%  
%%%%%%%%%%%%%%%%%%%%%%%%%%%%%%%%%%%%%%%%%%%%%%%%%%  
%%%%%%%%%%%%%%%%%%%%%%%%%%%%%%%%%%%%%%%%%%%%%%%%%%  

\section{Introduction}
This article represents a proof of concept for a method of addressing
certain systems of weakly null wave equations that do not satisfy the classical
null condition.  This example falls into the class of equations
studied in \cite{Hidano-Yokoyama}.  For
simplicity of exposition, we only consider a semilinear system.  % When
% considering related quasilinear equations, the ``problem of multiple
% good derivatives'', as is described in \cite{Keir}, is likely to be
% encountered, but the method developed in \cite{Facci-M} is anticipated
% to nice complement the work contained herein.
Unlike
\cite{Hidano-Yokoyama}, the methods here do not use the Lorentz
boosts, which is important for similar problems in the setting of
multiple speeds, exterior domains, or stationary asymptotically flat
background geometry.  And when compared to the methods of
\cite{Keir}, which apply to a broader class of weakly null equations,
we believe that our methods are simpler and, as we do not rely on null
foliations, additional applications to multiple speeds systems appear
possible.  The current work is most akin to that of
\cite{Hidano-Zha}, which is based on the ideas of \cite{KlSid} and
proves global existence without the use of the Lorentz boosts, but we
believe our method to have added flexibility for other applications.

In three spatial dimensions, it is known that solutions to semilinear
systems of equations of the 
form $\Box u = Q(\partial u)$ with nonlinearity that vanishes to
second order at the origin can only be guaranteed to exist almost
globally, which means that the lifespan grows exponentially as the
size of the data shrinks.  See, e.g., \cite{John-Klainerman} for the lower bound
on the lifespan and \cite{John_counterexample}
and\cite{Sideris_counterexample} for counterexamples to global 
existence.  Based on the fact that the components of the space-time
gradient $\partial u = (\partial_t u, \nabla_x u)$ that are tangent to
the light cone decay faster, the null condition was identified in
\cite{Christodoulou} and \cite{Klainerman} as a sufficient condition for guaranteeing
  small data global existence.  This condition requires that at least
  one factor of each nonlinear term (at the quadratic level) to be one of
  the ``good'' directions.  Einstein's equations, for example, do not
  satisfy this classical null condition, which led to the introduction
  of the weak null condition in \cite{Lindblad-Rodnianski-1, Lindblad-Rodnianski} as a possible sufficient
  condition for small data global existence.  Further evidence
  supporting this is given in \cite{Keir}.

  Here we shall consider a coupled system of equations.  One of the
  equations satisfies the classical null condition, but the other does
  not.  The intuition is that the equation satisfying the null
  condition has a solution that decays faster, and when that is
  plugged into the second equation, this additional decay allows for
  an argument to be closed.

  We specifically will consider
\begin{equation}
  \label{main_equation}
  \begin{cases}
    \Box u = \partial_tu\partial_t v - \nabla u\cdot\nabla v,\\ 
    \Box v = \partial_t v \partial_t u,\\
     (u(0,\cd),\partial_tu(0,\cd))=(u_{(0)},u_{(1)}),\\
    (v(0,\cd),\partial_t v(0,\cd))=(v_{(0)},v_{(1)}).
  \end{cases}
\end{equation}
For simplicity of exposition, we shall take the initial data to be
compactly supported, say within $\{|x|\le 2\}$.

In order to describe the ``good'' directions, we shall frequently
orthogonally decompose the (spatial) gradient into radial and angular portions:
\[\nabla = \frac{x}{r}\partial_r + \ang.\]
The directions that are tangent to the light cone are
\[\good = (\partial_t+\partial_r, \ang).\]
By noting that
\[\partial_t u\partial_t v - \nabla u\cdot\nabla v =
(\partial_t + \partial_r )u\partial_t v -\partial_r u(\partial_t +
\partial_r)v - \ang u \cdot \ang v,
  \]
we see that the equation for $u$ satisfies the null condition.  The
equation for $v$, however, does not.  Nevertheless, we shall prove
that solutions to \eqref{main_equation} with sufficiently small
initial data exist globally.

Our main theorem is the following statement of global existence.
\begin{theorem}\label{main_theorem}
Suppose that $u_{(j)}, v_{(j)}\in C^\infty_c(\R^3)$.  Then there is a
$N\in \mathbb{N}$ sufficiently large and $\varepsilon_0>0$
sufficiently small so that if
  \begin{equation}
    \label{smallness}
    \sum_{|\alpha|\le N+1} \|\partial_x^{\alpha} u_{(0)}\|_{L^2}
+ \sum_{|\alpha|\le N+1} \|\partial_x^{\alpha} v_{(0)}\|_{L^2}
+ \sum_{|\alpha|\le N} \|\partial_x^{\alpha}u_{(1)}\|_{L^2}
+\sum_{|\alpha|\le N} \|\partial_x^\alpha v_{(1)}\|_{L^2}
\le \varepsilon
  \end{equation}
with $\varepsilon\le \varepsilon_0$, then \eqref{main_equation} has a
unique global solution $(u,v)\in (C^\infty([0,\infty)\times \R^3))^2$.
\end{theorem}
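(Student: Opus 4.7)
\emph{Proof proposal.} I plan to run a bootstrap on a suite of weighted energy-type norms for the commuted unknowns $\Gamma^\alpha u$ and $\Gamma^\alpha v$, where $\Gamma=\{\partial_\mu,\Omega_{ij},S\}$ denotes the family of translations, rotations, and scaling (notably omitting the Lorentz boosts, as advertised in the abstract). Applying $\Gamma^\alpha$ for $|\alpha|\le N$ to \eqref{main_equation} produces inhomogeneous wave equations whose right-hand sides remain sums of products $\Gamma^\beta(\cd)\cdot\Gamma^\gamma(\cd)$; crucially, the null decomposition displayed in the introduction is preserved under commutation, so every source term in the $\Gamma^\alpha u$-equation still carries at least one good-derivative factor, while the $\Gamma^\alpha v$-equation does not. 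Combined with a standard local existence theorem, matters reduce to establishing uniform-in-$T$ a priori bounds.

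For each commuted unknown I would couple four pieces of control: the standard energy $\sup_{t\le T}\|\partial\Gamma^\alpha\cd(t)\|_{L^2}^2$; a local energy norm $\llp{\cd}{\LE}$ sharpened by the refined $\partial_t-\partial_r$ multiplier promised in the abstract; a Dafermos--Rodnianski $r^p$-weighted flux at null infinity for some $p\in(0,1)$; and an Alinhac ghost-weight spacetime integral of the form $\iint|\good\Gamma^\alpha\cd|^2\jap{r-t}^{-1-\mu}\,dx\,dt$. The bootstrap hypothesis would be that the $u$-quantities are bounded uniformly in $T$ by $C_1\varepsilon^2$, while the $v$-quantities are allowed a mild growth like $C_1\varepsilon^2\jap{T}^{C_2\varepsilon}$, reflecting the absence of null structure for $v$. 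The spacetime Klainerman--Sobolev estimates of the first author, Tataru, and Tohaneanu would then upgrade these $L^2$-type controls to pointwise decay of the expected shape $|\partial\Gamma^\alpha u|\lesssim\varepsilon\jap{t+r}^{-1}\jap{t-r}^{-1/2}$ for roughly $|\alpha|\le N/2$, with $v$ decaying comparably up to the small growth factor.

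In the $u$-equation, the nonlinear source splits as (good derivative)$\times$(generic derivative). I would place the good-derivative factor into the ghost-weighted $L^2_{t,x}$ norm and the generic factor into the Klainerman--Sobolev pointwise bound, so that the weight $\jap{r-t}^{-1-\mu}$ renders the $t$-integral convergent uniformly in $T$. For the $v$-equation, the factor $\partial_t u$ must be split into its $\partial_t+\partial_r$ part (a good derivative amenable to the ghost weight) and its $\partial_t-\partial_r$ part; the latter carries no null structure, so here I would instead exploit the sharper $\jap{t-r}^{-1/2}$ decay of $\partial u$ already delivered by the closed $u$-estimate, and absorb the corresponding $\partial v$ factor using the modified $\partial_t-\partial_r$ portion of the Morawetz multiplier. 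The Dafermos--Rodnianski $r^p$ flux supplies the needed radiation control at $\mathcal{I}^+$ without any null foliation.

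The principal obstacle is closing the $v$-bootstrap. Using only the basic $\jap{t+r}^{-1}$ pointwise decay of $u$, the naive bound $\int_0^T\|\partial u(t)\|_{L^\infty}\|\partial v(t)\|_{L^2}\,dt$ is logarithmically divergent. The whole argument rests on trading one factor of $L^\infty_t$ for an $L^2_t$ integration weighted by an integrable factor coming from $u$, and then absorbing the remaining $\partial v$ into the enhanced $\partial_t-\partial_r$ local energy norm; the delicate point is showing that this multiplier enhancement is strong enough to dominate the non-null term without destroying positivity of the multiplier elsewhere in phase space. Once the $v$-bootstrap closes, the feedback into the $u$-estimate comes multiplied by $\varepsilon$, so the coupled system decouples at leading order and the full bootstrap closes.
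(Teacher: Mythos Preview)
Your toolkit matches the paper's---admissible vector fields without boosts, the $r^p$ multiplier combined with the ghost weight, the refined $(1+r)^{-\delta}(\partial_t-\partial_r)$ piece, and the space-time Klainerman--Sobolev lemmas---so the broad strategy is sound. The main structural departure is your bootstrap hypothesis allowing the $v$-norms to grow like $\la T\ra^{C\varepsilon}$; the paper shows no growth is needed. Its iteration norm $M_k$ (Picard rather than continuity, a cosmetic difference) controls $\|\la r\ra^{-\frac{1+\delta}{2}}\partial Z^{\le N}v\|_{L^2L^2}$ and $\|\la r\ra^{-\frac{\delta}{2}}\partial Z^{\le N}v\|_{L^\infty L^2}$ uniformly. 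The point you are underweighting is exactly what the $(1+r)^{-\delta}$ modification buys: in the resulting estimate \eqref{newLE} the forcing carries the weight $\la r\ra^{\frac{1-\delta}{2}}$, and a single application of the weighted Sobolev bound $\|h\|_{L^\infty(A_R)}\lesssim R^{-1}\|Z^{\le 2}h\|_{L^2(\tilde A_R)}$ on the $u$-factor then gives
\[
\|\la r\ra^{\frac{1-\delta}{2}}\,\partial_t u\,\partial_t v\|_{L^2L^2}\lesssim \|\partial Z^{\le 2}u\|_{L^\infty L^2}\,\|\la r\ra^{-\frac{1+\delta}{2}}\partial v\|_{L^2L^2},
\]
closing the $v$-estimate with no null decomposition of $\partial_t u$, no appeal to the $\la t-r\ra^{-1/2}$ pointwise decay of $\partial u$, and no logarithmic loss. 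Your proposed splitting of $\partial_t u$ and the allowance for growth are therefore unnecessary complications; the mechanism is simply that the extra $\la r\ra^{-\delta}$ on the forcing, together with the $r^{-1}$ from weighted Sobolev, is already summable. A secondary difference: the paper builds the pointwise $L^\infty L^\infty$ bounds on $C^R_\tau$ and $C^U_\tau$ directly into $M_k$ (with $\ell^\infty$ summability for $u$ and $\ell^2$ for $v$), rather than treating them as a posteriori consequences; these are what control the $r^p$-weighted forcing terms in \eqref{MR}.
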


The methods that we employ are partly inspired by \cite{KSS} where
almost global existence was established for equations without a null
condition by pairing a local energy estimate with a weighted Sobolev
estimate that provides decay in $|x|$ rather the $t$.  
The latter does not require the use of any time dependent vector
fields, which was instrumental in adapting the method of invariant
vector fields to, e.g., exterior domains.  The paper \cite{dafermos_rodnianski}
developed the $r^p$-weighted local energy estimate.  In this variant
of the local energy estimate, the additional decay for the ``good''
derivatives manifests itself as much improved weights.  In
\cite{Facci-Mcentarrfer-M}, an analog of \cite{KSS} was established using these
$r^p$-weighted estimates in order to show global existence for wave
equations with the null condition.

In \cite{M-Rhoads}, the $r^p$-weighted multiplier of \cite{dafermos_rodnianski} was
combined with a ``ghost weight'' as in \cite{Alinhac_ghostweight}.
The resulting estimate allowed for additional improvements on the
weight of $(\partial_t+\partial_r)u$ near the light cone.  This was
then combined with the space-time Klainerman-Sobolev estimates of
\cite{MTT} in order to establish long-time existence for systems of
wave equations where the nonlinearity is allowed to depend on the
solution not just its derivative.  We rely strongly upon these ideas.
A further modification of the $(\partial_t-\partial_r)$ component
of the multiplier for typical local energy estimates is introduced here.
This modification, in particular, while requiring a faster decaying
weight also provides a more rapidly decaying weight on the forcing
term.  

%%%%%%%%%%%%%%%%%%%%%%%%%%%%%%%%%%%%%%%%%%%%%%%%%%
\subsection{Notation}
Here we fix some notation that will be used throughout the paper.  We
let
\[\Omega=x\times \nabla,\quad S=t\partial_t+r\partial_r,\quad
  Z=(\partial_t, \nabla, \Omega, S)\] 
denote the admissible vector fields.  We will use the shorthand
\[|Z^{\le N} u|=\sum_{|\mu|\le N} |Z^{\mu} u|,\quad |\partial^{\le N}
  u|=\sum_{|\mu|\le N} |\partial^\mu u|.\]
A key property of the vector fields $Z$ is that they all preserve
solutions to the homogeneous wave equation since
\[[\Box, \partial]=[\Box,\Omega]=0,\quad [\Box, S] = 2\Box.\]
It will also be important to notice that
\begin{equation}
  \label{commutators}
  [Z,\partial]\in\text{span}(\partial),\quad |[Z, \good]u|\le
  \frac{|Zu|}{r} + |\good u|.
\end{equation}
In the proof of local energy estimates, we will frequently use that
\begin{equation}
  \label{angCommutator}
  [\nabla, \partial_r] = [\ang, \partial_r] = \frac{1}{r}\ang.
\end{equation}
We, moveover, note that
\begin{equation}
  \label{angBound}
  \ang = -\frac{x}{r^2}\times \Omega,\quad |\ang u|\le \frac{1}{r}|Zu|.
\end{equation}

We will often decompose $\R^3$ into (inhomogeneous) dyadic
regions.  To that end, let
\[A_{R}=\{R\le \la x\ra\le 2R\},\quad  
\tilde{A}_R = \Bigl\{\frac{7}{8}R\le \la x\ra \le
\frac{17}{8}R\Bigr\}.\]
Similarly, we set
\[X_U = \{(t,x)\in \R_+\times\R^3\,:\, U\le \la t-r\ra\le 2U\},\]
with $\tilde{X}_U$ denoting a similar enlargement.

We shall use a finer refinement, as in \cite{MTT}, when necessary.
Because of our assumption that the initial data are supported in
$\{|x|\le 2\}$ and because of the finite speed of propagation, it will
suffice to examine $C=\{r\le t+2\}$.  We then consider a dyadic strip
\[C_\tau = \{(t,x)\in \R_+\times \R^3\,:\, \tau\le t\le 2\tau, \: r\le t+2\}.\]

Away from the light cone $t=|x|$, we further decompose into dyadic
regions in the $r$ variable:
\[C^{R=1}_\tau = C_\tau \cap \{r\le 2\},\quad  C^R_\tau = C_\tau\cap
  \{R\le r\le 2R\} \text{ when } 1<R\le \tau/4.\]
We additionally set
\[\tilde{C}^{R=1}_\tau = C \cap \Bigl\{\frac{7}{8}\tau \le t\le \frac{17}{8}\tau, r\le
  \frac{17}{8}\Bigr\},\quad \tilde{C}^R_\tau = C \cap \Bigl\{\frac{7}{8}\tau\le t \le
  \frac{17}{8}\tau, \frac{7}{8}R\le r\le
  \frac{17}{8}R\Bigr\} \text{ when } 1<R\le \tau/4\]
to denote slight enlargements, which will accommodate the tails of the
cutoff functions that are used to localize in the sequel.  The key
property is that
\[\la r\ra\approx R,\quad t-r\approx \tau \quad \text{ on } C^R_\tau,
  \tilde{C}^R_\tau \quad \text{ with } \tau \ge 4 \text{ and } 1\le R\le \tau/4.\]

In the vicinity of the light cone, we instead dyadically decompose in
$t-|x|$.  To this end, let
\[C^{U=1}_\tau =
  C_\tau\cap \{|t-r|\le 2\},\quad C^U_\tau = C_\tau\cap
  \{U\le t-r\le 2U\} \text{ when } 1<R\le \tau/4.\]
As above, we denote a slight enlargement on both scales by
\[\tilde{C}^{U=1}_\tau = C \cap
  \Bigl\{\frac{7}{8}\tau\le t\le \frac{17}{8}\tau, |t-r|\le
  \frac{17}{8}\Bigr\},\]
and 
\[  \tilde{C}^U_\tau = C \cap
  \Bigl\{\frac{7}{8}\tau\le t\le \frac{17}{8}\tau, \frac{7}{8}U\le
  t-r\le \frac{17}{8}U\Bigr\} \text{ when } 1<U\le \tau/4.\]
These choices give
\[r \approx \tau,\quad \la t-r\ra \approx U \quad \text{ on } C^U_\tau,
  \tilde{C}^U_\tau \quad \text{ with } \tau \ge 4 \text{ and } 1\le U\le \tau/4.\]

With these notations in place, we have
\[C_\tau =  \Bigl(\bigcup_{1\le R\le
    \tau/4} C^R_\tau\Bigr)\cup\Bigl(\bigcup_{1\le U\le \tau/4}
  C^U_\tau\Bigr) \cup C^{\tau/2}_\tau\]
where
\[C^{\tau/2}_\tau = C_\tau \cap \{t-r\ge \tau/2\} \cap \{r\ge
  \tau/2\}.\]
On $C^{\tau/2}_\tau$, we have $r\approx \tau$ and $t-r\approx \tau$.
We may regard this region as either a $C^R_\tau$ or a $C^U_\tau$ region.

% \vskip 1in
% \blue{FINISH ME}
% \vskip 2in
% We then set
% \[C_\tau = \{(t,x)\in \R_+\times \R^3\,:\, \tau\le t\le 2\tau, \: r\le t+2\} = \Bigl(\bigcup_{1\le R\le
%     \tau/4} C^R_\tau\Bigr)\cup\Bigl(\bigcup_{1\le U\le \tau/4} C^U_\tau\Bigr)\]
% where
% \[C^R_\tau = C_\tau\cap \{R\le r\le 2R\},\quad C^U_\tau = C_\tau\cap
%   \{U\le t-r\le 2U\},\quad R, U>1\]
% and
% \[C^{R=1}_\tau = C_\tau \cap \{r\le 2\},\quad C^{U=1}_\tau =
%   C_\tau\cap \{|t-r|\le 2\}.\]
% Additionally, let
% \[\tilde{C}^R_\tau = C \cap \Bigl\{\frac{7}{8}\tau\le t \le
%   \frac{17}{8}\tau, \frac{7}{8}R\le r\le
%   \frac{17}{8}R\Bigr\},\quad \tilde{C}^U_\tau = C \cap
%   \Bigl\{\frac{7}{8}\tau\le t\le \frac{17}{8}\tau, \frac{7}{8}U\le
%   t-r\le \frac{17}{8}U\Bigr\}\] 
% for $R, U > 1$ and
% \[\tilde{C}^{R=1}_\tau = C \cap \Bigl\{0\le t\le \frac{17}{8}, r\le
%   \frac{17}{8}\Bigr\},\quad \tilde{C}^{U=1}_\tau = C \cap
%   \Bigl\{0\le t\le \frac{17}{8}, |t-r|\le \frac{17}{8}\Bigr\}.\]
% For $1\le R\le \tau/4$, on $C^R_\tau$ and $\tilde{C}^R_\tau$, we have
% $\la r\ra\approx R$ and $t-r \approx \tau$.  And for $1\le U \le \tau/4$, on
% $C^U_\tau$ and $\tilde{C}^U_\tau$, $\la t-r\ra\approx U$ and $r\approx
% \tau$.

\begin{center}
    \includegraphics[width=.6\textwidth]{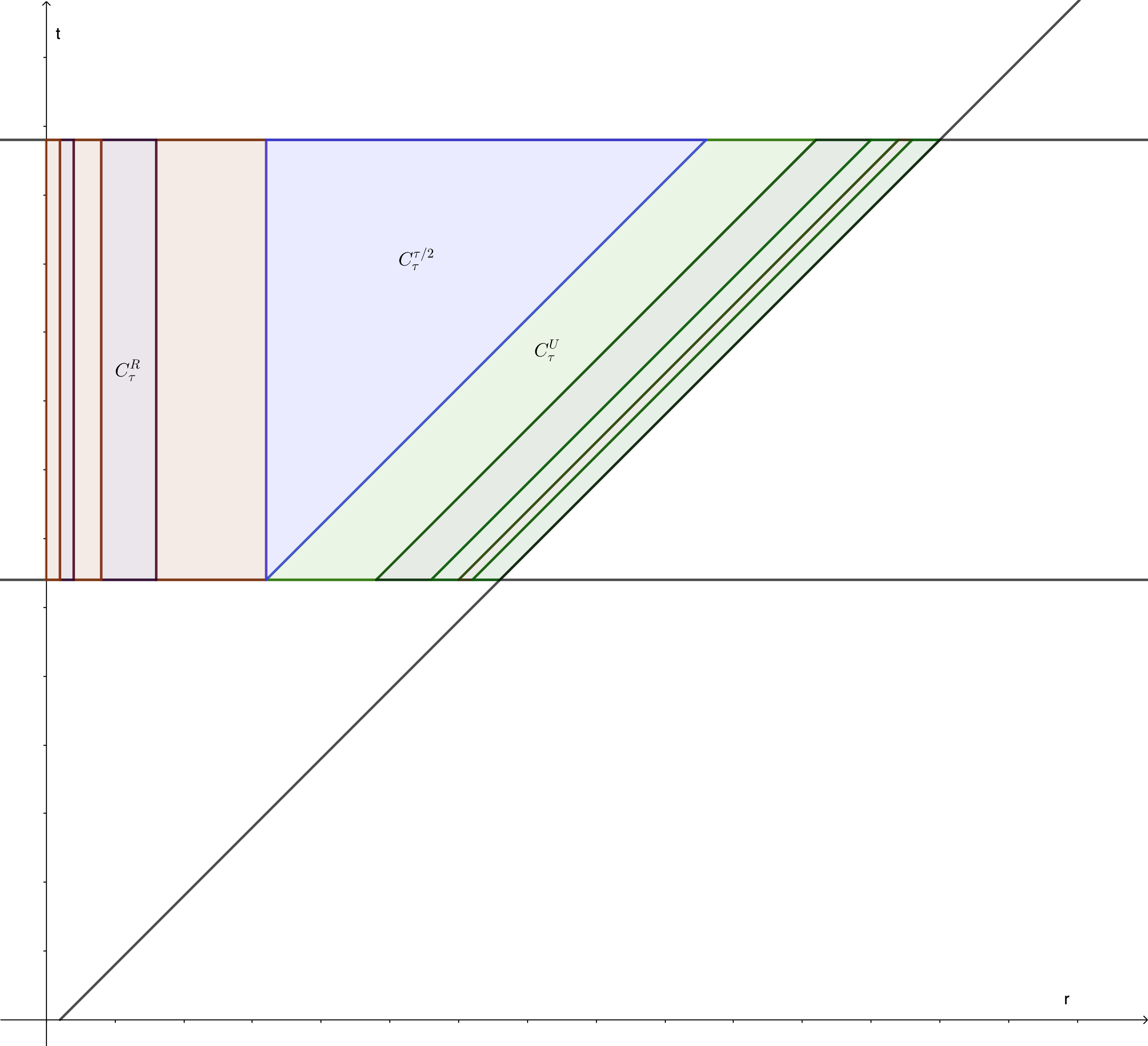}
  \captionof{figure}{The decomposition of $C_\tau$ into $C^R_\tau$ and $C^U_\tau$ regions}
\end{center}

On occasion, we shall use $\ttC^R_\tau, \ttC^U_\tau$ to denote
an enlargement of $\tC^R_\tau, \tC^U_\tau$ respectively.  In the sequel, it will be understood that $\tau, R, U$ always run
over dyadic values.

In order to localize to such regions, we fix the following notation
for cutoff functions.  Let $\chi$ be a smooth, nonnegative function
so that $\chi(z)\equiv 1$ for $z\ge 1$ and $\chi(z)\equiv 0$ for $z\le
7/8$.  We also set
\[\beta(z)=\chi(z)-\chi\Bigl(z-\frac{9}{8}\Bigr)\] so that
$\beta(z)\equiv 1$ for $1\le z\le 2$ and $\beta(z)\equiv 0$ when
$z\not\in [7/8, 17/8]$.

We will frequently use the mixed norm notation
\[\|u\|^p_{L^p L^q} = \int_0^\infty \|u(t,\cd)\|_{L^q(\R^3)}^p\,dt,\]
with the obvious alteration when $p=\infty$.  Unless specified, the
domain of all mixed norms of this type is $\R_+\times \R^3$.  We also fix the
following local energy norms, which will be discussed more in the next section:
\[\|u\|_{LE} = \sup_{R\ge 1} R^{-\frac{1}{2}}\|u\|_{L^2L^2(\R_+\times
    A_R)},\quad \|u\|_{LE^1}=\|(\partial u, u/r)\|_{LE}.\]

%%%%%%%%%%%%%%%%%%%%%%%%%%%%%%%%%%%%%%%%%%%%%%%%%%  
%%%%%%%%%%%%%%%%%%%%%%%%%%%%%%%%%%%%%%%%%%%%%%%%%%  
%%%%%%%%%%%%%%%%%%%%%%%%%%%%%%%%%%%%%%%%%%%%%%%%%%  

  \section{Local Energy Estimates}
  The integrated local energy estimate
  \begin{equation}\label{LE}
\|u\|_{LE^1}^2+
    \|\partial u\|^2_{L^\infty L^2}\lesssim
    \|\partial u(0,\cd)\|^2_{L^2} + \int_0^\infty \int |\Box u|\Bigl(|\partial
    u| + \frac{|u|}{r}\Bigr)\,dx\,dt
  \end{equation}
  is frequently proved by pairing the equation $\Box u$ with a
  multiplier of the form $\partial_t + f(r)\partial_r + \frac{f(r)}{r}
  u$, integrating over a space-time slab, and integrating by parts.
  The function $f(r)$ needs to be $C^2$, bounded, non-negative,
  increasing, and satisfy
  $-\Delta(f(r)/r) \ge 0$, which the function $f(r) = r/(r+R)$
  appropriately satisfies.  See \cite{Sterbenz}, \cite{MS, ms_mathz}.
  We may rewrite this multiplier as
  \begin{equation}\label{multiplier}\partial_t + \frac{r}{r+R}\partial_r + \frac{1}{r+R}=
  \frac{R/2}{r+R}\Bigl(\partial_t-\partial_r-\frac{1}{r}\Bigr) +
  \frac{r+(R/2)}{r+R} \Bigl(\partial_t+\partial_r+\frac{1}{r}\Bigr),\end{equation}
which has the property that the coefficient of
$\partial_t-\partial_r - \frac{1}{r}$ is nonnegative and decreasing in
$r$,
while the coefficient of $\partial_t+\partial_r+\frac{1}{r}$ is
nonnegative and increasing.  While there are other requirements, this
is the key
observation to allow for generalizations of the multiplier.  In
particular, we shall later consider
\[(1+r)^{-\delta} \Bigl(\partial_t - \partial_r - \frac{1}{r}\Bigr) +
  (1+r)^p e^{-\sigma_U(t-r)} \Bigl(\partial_t+\partial_r +
  \frac{1}{r}\Bigr).\]
Here $\sigma_U(z) = z/(U+|z|)$, $\delta > 0$, and $0<p<2$.

Multipliers of the form $r^p \Bigl(\partial_t + \partial_r +
\frac{1}{r}\Bigr)$ appeared previously in \cite{dafermos_rodnianski}
and $e^{-\sigma(t-r)}\partial_t$ in \cite{Alinhac_ghostweight}.  The
combination of the two as reflected above is originally from
\cite{M-Rhoads}.  The change in multiplier on $\partial_t -
\partial_r - \frac{1}{r}$ 
provides an additional degree of decay on the forcing term that helps to close the
nonlinear arguments in the sequel.

We first record a corollary of \eqref{LE}.
\begin{proposition}
Suppose $u\in C^2(\R_+\times \R^3)$ and for all $t\in \R_+$,
$|\partial^{\le 1} u(t,x)|\to 0$ as $|x|\to \infty$.  Then
  \begin{equation}
    \label{LE1}
    \|u\|_{LE^1} + \|\partial u\|_{L^\infty L^2} \lesssim \|\partial
    u(0,\cd)\|_{L^2} + \int_0^\infty \|\Box u(t,\cd)\|_{L^2}\,dt.
  \end{equation}
\end{proposition}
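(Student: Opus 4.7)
The plan is to derive \eqref{LE1} from the bilinear estimate \eqref{LE} by Cauchy-Schwarz, Hardy's inequality, and an absorption (AM-GM) argument. The idea is that the right-hand side of \eqref{LE} is linear in $\Box u$ but also linear in $\partial u$ and $u/r$, so one factor on the right can be absorbed into the left-hand side, leaving the claimed linear dependence on $\int \|\Box u\|_{L^2}\,dt$.

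First, I would bound the nonlinear-looking term in \eqref{LE}. Applying Cauchy-Schwarz in $x$ at each fixed $t$ gives
\[\int |\Box u|\Bigl(|\partial u| + \frac{|u|}{r}\Bigr)\,dx \le \|\Box u(t,\cd)\|_{L^2}\,\Bigl\|\,|\partial u(t,\cd)| + \frac{|u(t,\cd)|}{r}\,\Bigr\|_{L^2}.\]
Since the decay hypothesis $|\partial^{\le 1}u(t,x)|\to 0$ as $|x|\to\infty$ allows the standard Hardy inequality on $\R^3$, we have $\|u(t,\cd)/r\|_{L^2}\lesssim \|\nabla u(t,\cd)\|_{L^2}$, so the second factor is controlled by $\|\partial u(t,\cd)\|_{L^2}$. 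Taking the supremum of $\|\partial u(t,\cd)\|_{L^2}$ out of the $t$-integral then yields
\[\int_0^\infty \int |\Box u|\Bigl(|\partial u| + \frac{|u|}{r}\Bigr)\,dx\,dt \lesssim \|\partial u\|_{L^\infty L^2}\int_0^\infty \|\Box u(t,\cd)\|_{L^2}\,dt.\]

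Inserting this into \eqref{LE} and writing $E=\|u\|_{LE^1}+\|\partial u\|_{L^\infty L^2}$, $A=\|\partial u(0,\cd)\|_{L^2}$, $B=\int_0^\infty\|\Box u(t,\cd)\|_{L^2}\,dt$, we obtain the quadratic inequality
\[E^2\lesssim A^2 + E\cdot B.\]
An application of AM-GM, $E\cdot B\le \tfrac12 \epsilon E^2 + \tfrac{1}{2\epsilon}B^2$ with $\epsilon$ small, lets us absorb the $E^2$ term into the left-hand side, yielding $E\lesssim A+B$, which is \eqref{LE1}.

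The only subtlety is justifying the absorption step, since it requires knowing a priori that $E<\infty$. I would handle this by a standard continuity/truncation argument: replace $E$ by $E_T$, the analogous quantity on the time slab $[0,T]\times \R^3$, run the same estimates to get $E_T^2\lesssim A^2 + E_T\cdot B$, use the smoothness and spatial decay hypotheses to ensure $E_T<\infty$ for each finite $T$, absorb to obtain $E_T\lesssim A+B$ uniformly in $T$, and then let $T\to\infty$ via monotone convergence. This bootstrapping, rather than any delicate new estimate, is the main (but routine) obstacle in making the absorption rigorous.
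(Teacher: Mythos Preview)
Your argument is correct and matches the paper's proof essentially verbatim: Cauchy--Schwarz in $x$, Hardy's inequality to control $\|u/r\|_{L^2}$ by $\|\partial u\|_{L^2}$, and then absorption of the resulting factor of $\|\partial u\|_{L^\infty L^2}$ into the left-hand side. Your added remark about truncating to $[0,T]$ to justify the absorption is a welcome bit of rigor that the paper leaves implicit.
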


The proposition follows immediately from \eqref{LE} upon applying the
Schwarz inequality to see that
\[\int_0^\infty \int |\Box u|\Bigl(|\partial u| +
  \frac{|u|}{r}\Bigr)\,dx\,dt
  \le \Bigl(\|\partial u\|_{L^\infty L^2} + \|r^{-1} u\|_{L^\infty
    L^2}\Bigr) \int_0^\infty \|\Box u(t,\cd)\|_{L^2}\,dt.\]
A Hardy inequality gives
\[\|r^{-1} u\|_{L^2}\lesssim \|\partial_r u\|_{L^2},\]
which permits the first factor above to be bootstrapped.

We will now discuss the mixed $r^p$-weighted and ghost weighted
estimates of \cite{M-Rhoads}, where the former is motivated by
\cite{dafermos_rodnianski} and the latter by
\cite{Alinhac_ghostweight}.  To begin, we look at a variant of the
Hardy inequality that holds in the space-time norms and yields a
``good'' derivative.  This, in essence, previously appeared in \cite{M-Rhoads}.
\begin{lemma} Fix $0< p<2$.  Suppose $u\in C^2(\R_+\times\R^3)$ and
for every $t\in \R_+$, $r^{\frac{p}{2}}|u(t,x)|\to 0$ as $|x|\to
\infty$.  Then,
\begin{equation}
  \label{Hardy}
 \|\la r\ra^{\frac{p-1}{2}}r^{-1} u\|_{L^2L^2} + \|\la
 r\ra^{\frac{p-1}{2}}r^{-\frac{1}{2}}u\|_{L^\infty L^2}
\lesssim \|\la r\ra^{\frac{p-1}{2}}r^{-\frac{1}{2}}u(0,\cd)\|_{L^2} +
\|\la r\ra^{\frac{p-1}{2}} r^{-1}(\partial_t+\partial_r)(ru)\|_{L^2L^2}.
\end{equation}
\end{lemma}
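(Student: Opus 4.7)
The plan is to substitute $v = ru$ and reduce to a one-dimensional weighted estimate in $r$. Since $dx = r^2\, dr\, d\omega$, each of the four squared norms in \eqref{Hardy} collapses to an integral of $v^2$ or $|(\partial_t+\partial_r)v|^2$ against a purely radial weight, with the $S^2$-integration serving as a passive spectator. Explicitly, $\|\la r\ra^{(p-1)/2} r^{-1} u\|_{L^2L^2}^2 = \int \la r\ra^{p-1} r^{-2} v^2\, dr\, d\omega\, dt$, and similarly for the remaining three terms, where the $r^{-1}(\partial_t+\partial_r)(ru)$ factor on the right produces the weight $\la r\ra^{p-1}$ on $|(\partial_t+\partial_r) v|^2$ with no $r$-singularity.

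Choose the radial weight $g(r) = \la r\ra^{p-1}/r$. A direct computation gives
\[-g'(r) = \la r\ra^{p-3}\bigl(1 + (2-p) r^2\bigr)/r^2,\]
so the hypothesis $0 < p < 2$ makes $g$ strictly decreasing on $(0,\infty)$ and yields the uniform comparisons $-g'(r) \approx \la r\ra^{p-1}/r^2$ and $g(r)^2/(-g'(r)) \approx \la r\ra^{p-1}$ across both $r\lesssim 1$ and $r\gg 1$. The engine of the proof is the pointwise identity
\[(\partial_t + \partial_r)\bigl( g(r) v^2\bigr) = g'(r) v^2 + 2 g(r) v\, (\partial_t+\partial_r) v,\]
integrated over $(t,r)\in [0,T]\times[0,\infty)$ and over $\omega\in S^2$. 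The boundary contribution at $r = 0$ vanishes because $v(t,0) = 0$ for smooth $u$; the contribution at $r=\infty$ vanishes because $g(r) v^2 \sim r^p u^2$ and the hypothesis $r^{p/2}|u|\to 0$ forces this to zero.

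Moving the $(-g') v^2$ term to the left, the integration yields
\[\int g\, v^2(T)\, dr\, d\omega + \int_0^T\!\!\!\int (-g') v^2\, dr\, d\omega\, dt = \int g\, v^2(0)\, dr\, d\omega + 2\int_0^T\!\!\!\int g\, v (\partial_t+\partial_r) v\, dr\, d\omega\, dt.\]
A Cauchy--Schwarz bound on the last term with weight $\sqrt{-g'}$ produces an $\epsilon$-small copy of $\int(-g') v^2$, which is absorbed into the left, together with a remainder controlled by $\epsilon^{-1}\int [g^2/(-g')]|(\partial_t+\partial_r) v|^2 \lesssim \int \la r\ra^{p-1} |(\partial_t+\partial_r) v|^2$. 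Taking $\sup_{T\ge 0}$ and translating back to $u$ via $v = ru$ produces \eqref{Hardy}. The one subtle point is the comparison step for the weights: it crucially relies on $p < 2$, without which $g$ fails to be monotone on $(0,\infty)$ and the favorable sign structure of $-g'$ collapses near $r=0$.
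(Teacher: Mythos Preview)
Your proof is correct and follows essentially the same approach as the paper: both substitute $v=ru$, integrate the identity $(\partial_t+\partial_r)(g\,v^2)=g'v^2+2gv(\partial_t+\partial_r)v$ against a radial weight $g(r)\approx \la r\ra^{p-1}/r$, and close with Cauchy--Schwarz and absorption. The only cosmetic difference is organizational---the paper writes $r^{-2}=-\,\partial_r(r^{-1})$ and integrates by parts on that factor alone, leaving a residual $(p-1)(1+r)^{p-2}r^{-1}$ term to be absorbed via $|p-1|<1$, whereas you differentiate $g$ as a whole and read off $-g'\approx \la r\ra^{p-1}r^{-2}$ directly; one small inaccuracy in your closing remark is that for $p>2$ the sign of $-g'$ fails at large $r$, not near $r=0$.
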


\begin{proof}
We write
\[\int_0^T\int \frac{(1+r)^{p-1}}{r^2} u^2\,dx\,dt
= - \int_0^T \int_{\S^2}\int_0^\infty
(1+r)^{p-1}\Bigl[(\partial_t+\partial_r)(r^{-1})\Bigr]
(ru)^2\,dr\,d\omega\,dt.\]
Here $d\omega$ is the volume form on $\S^2$.
Integration by parts gives that this is
\begin{multline*}
  =-\int \frac{(1+r)^{p-1}}{r} u^2(T,x)\,dx + \int \frac{(1+r)^{p-1}}{r}u^2(0,x)\,dx
\\+(p-1)\int_0^T\int \frac{(1+r)^{p-2}}{r} u^2\,dx\,dt
+2\int_0^T\int \frac{(1+r)^{p-1}}{r^2} u (\partial_t+\partial_r)(ru)\,dx\,dt.
\end{multline*}
Using that $\frac{1}{1+r}\le \frac{1}{r}$ in the third term and 
applying the Schwarz inequality to the last term then shows that
\begin{multline*}
(1-|p-1|)  \int_0^T\int \frac{(1+r)^{p-1}}{r^2} u^2\,dx\,dt + \int
\frac{(1+r)^{p-1}}{r} u^2(T,x)\,dx 
\le \int \frac{(1+r)^{p-1}}{r}u^2(0,x)\,dx
\\+ 2 \Bigl(\int_0^T \int \frac{(1+r)^{p-1}}{r^2}
u^2\,dx\,dt\Bigr)^{\frac{1}{2}}\Bigl(\int_0^T\int
(1+r)^{p-1}\Bigl(r^{-1}(\partial_t+\partial_r)(ru)\Bigr)^2\,dx\,dt\Bigr)^{\frac{1}{2}}. 
\end{multline*}
Bootstrapping the first factor of the last term and taking a supremum
over $T$ then yields \eqref{Hardy}.
\end{proof}

We next record what, in essence, is the main new estimate of \cite{M-Rhoads}.  
\begin{proposition}
Fix $0<p<2$.  If $u\in C^2(\R_+\times\R^3)$ and
$r^{\frac{p+2}{2}}|\partial^{\le 1} u(t,x)|\to 0$ as $|x|\to \infty$, then
  \begin{multline}
    \label{MR}
  \|\la r\ra^p (\partial_t+\partial_r)u\|_{L^\infty L^2} +
  \|\la r\ra^p \ang u\|_{L^\infty L^2} + \|\la
  r\ra^{\frac{p-1}{2}}r^{-\frac{1}{2}} u\|_{L^\infty L^2}
\\+ \|\la r\ra^{\frac{p-1}{2}}
  (\partial_t + \partial _r)u\|_{L^2L^2} + \|\la
  r\ra^{\frac{p}{2}} r^{-\frac{1}{2}} \ang u\|_{L^2L^2}
+ \|\la r\ra^{\frac{p-1}{2}} r^{-1} u\|_{L^2L^2}
  \\+ \sup_{U\ge 1}U^{-\frac{1}{2}}\|\la
  r\ra^{\frac{p}{2}} 
  r^{-1}(\partial_t+\partial_r)(ru)\|_{L^2L^2(X_U)}
  \\ \lesssim \|\la r\ra^{\frac{p-1}{2}}r^{-\frac{1}{2}}u(0,\cd)\|_{L^2}+
  \|\la r\ra^{\frac{p}{2}} (\partial_t+\partial_r)u(0,\cd)\|_{L^2} +
  \|\la r\ra^{\frac{p}{2}} \ang u(0,\cd)\|_{L^2}
  \\+
  \Bigl(\sum_{\tau} \sum_{R\le \tau/4} \|\la r\ra^{\frac{p+1}{2}}\Box
  u\|^2_{L^2L^2(C^R_\tau)}\Bigr)^{\frac{1}{2}}
  + \sum_U \Bigl(\sum_{\tau \ge 4U} U 
  \|\la r\ra^{\frac{p}{2}}\Box u\|^2_{L^2L^2(C^U_\tau)}\Bigr)^{\frac{1}{2}}.
  \end{multline}
\end{proposition}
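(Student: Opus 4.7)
My plan is to prove \eqref{MR} by the multiplier method, combining the $r^p$-hierarchy of Dafermos-Rodnianski with Alinhac's ghost weight, following \cite{M-Rhoads}. The starting point is the spherical identity
\[
r\Box u \;=\; L\B(ru) \;-\; \frac{1}{r}\sDelta u, \qquad L = \partial_t+\partial_r,\ \B = \partial_t-\partial_r.
\]
For each dyadic $U \ge 1$ I would test the equation against the weighted outgoing multiplier $Xu := (1+r)^p e^{-\sigma_U(t-r)} r^{-1} L(ru)$ and integrate by parts over $[0,T]\times\R^3$.

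The integration by parts should produce three types of favorable contributions. Writing $L\B(ru)\cdot L(ru) = \tfrac12 \B |L(ru)|^2$ and pushing $\B$ onto the weight via
\[
\B\bigl[(1+r)^p e^{-\sigma_U(t-r)}\bigr] \;=\; -p(1+r)^{p-1} e^{-\sigma_U(t-r)} \,-\, 2\sigma_U'(t-r)(1+r)^p e^{-\sigma_U(t-r)},
\]
one gains the bulk $\tfrac{p}{2}(1+r)^{p-1}e^{-\sigma_U}|L(ru)|^2$, which, modulo $u/r$ terms handled by \eqref{Hardy}, delivers the $\la r\ra^{(p-1)/2}(\partial_t+\partial_r)u$ norm on the left of \eqref{MR}. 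The ghost-weight term $\sigma_U'(t-r)(1+r)^p e^{-\sigma_U}|L(ru)|^2$, with $\sigma_U'(z) = U/(U+|z|)^2$, is localized to $X_U$ and, after taking a supremum over dyadic $U$, yields the $U^{-1/2}$-weighted $\la r\ra^{p/2}r^{-1}L(ru)$ term. Integration by parts of $r^{-1}\sDelta u$ on the unit sphere together with $|\ang u|^2 = r^{-2}|\Omega u|^2$ produces the angular bulk, and the $t=0, T$ boundary terms (after expanding $L(ru) = rLu + u$ and invoking \eqref{Hardy}) furnish the three $L^\infty L^2$ norms on the left-hand side.

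The remaining spacetime pairing $\iint \Box u \cdot X u$ is then estimated by Cauchy-Schwarz on the dyadic $C^R_\tau$, $C^U_\tau$ decomposition. On $C^R_\tau$ the ghost weight is comparable to one, so $\|\la r\ra^{(p+1)/2}\Box u\|_{L^2L^2(C^R_\tau)}$ is paired with the extracted bulk $\la r\ra^{(p-1)/2}(\partial_t+\partial_r)u$, and $\ell^2$-summation in $R$ gives the first sum on the right of \eqref{MR}. On $C^U_\tau$ one instead pairs against the ghost-weighted bulk, and Cauchy-Schwarz against $U^{-1/2}\la r\ra^{p/2}r^{-1}L(ru)$ generates exactly the $U^{1/2}\la r\ra^{p/2}\Box u$ factor appearing in \eqref{MR}. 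The main obstacle is the $U$-dependence: the identity must be run uniformly across dyadic $U \ge 1$, and one must check that the Dafermos-Rodnianski structural positivity — the coefficient of $L + r^{-1}$ being nonnegative and increasing in $r$ — persists uniformly in $U$ so that the bulk terms can be bootstrapped to the left after reassembling the dyadic pieces.
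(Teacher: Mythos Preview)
Your proposal is correct and follows essentially the same route as the paper: multiply $\Box u$ by $(1+r)^p e^{-\sigma_U(t-r)}r^{-1}(\partial_t+\partial_r)(ru)$, integrate by parts to extract the $r^p$ bulk, the ghost-weight bulk localized to $X_U$, and the angular bulk, then estimate the forcing via Cauchy--Schwarz on the $C^R_\tau/C^U_\tau$ decomposition, bootstrap, and finish with \eqref{Hardy}. The uniformity in $U$ that concerns you is handled exactly as you suggest, using that $e^{-\sigma_U}$ is bounded above and below independently of $U$.
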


\begin{proof}
  Noting that
  \begin{equation}
  \label{conjugate}
  \Box u =
  r^{-1}\Bigl(\partial_t^2-\partial_r^2-\ang\cdot\ang\Bigr)(ru), \quad
  \Bigl(\partial_t+\partial_r + \frac{1}{r}\Bigr) u = r^{-1}\Bigl(\partial_t+\partial_r\Bigr)(ru),
\end{equation}
we  consider
\begin{multline*}\int_0^T\int (1+r)^p e^{-\sigma_U(t-r)}\Box u
  \Bigl(\partial_t+\partial_r+\frac{1}{r}\Bigr)u\,dx\,dt \\=
  \int_0^T\int_{\S^2}\int_0^\infty
  (1+r)^pe^{-\sigma_U(t-r)}\Bigl(\partial_t^2-\partial_r^2-\ang\cdot\ang\Bigr)(ru)
  \Bigl(\partial_t+\partial_r\Bigr)(ru)\,dr\,d\omega\,dt 
\end{multline*}
for $0<p<2$, which, using \eqref{angCommutator}, is equivalent to
\begin{multline*}\frac{1}{2}\int_0^T\int_{\S^2}\int_0^\infty (1+r)^p e^{-\sigma_U(t-r)}
  \Bigl(\partial_t-\partial_r\Bigr)\Bigl[\Bigl(\partial_t+\partial_r\Bigr)(ru)\Bigr]^2\,dr\,
  d\omega \,dt 
  \\+ \frac{1}{2}\int_0^T\int_{\S^2}\int_0^\infty (1+r)^p e^{-\sigma_U(t-r)}
  \Bigl(\partial_t+\partial_r\Bigr)|\ang (ru)|^2\,dr\,d\omega\,dt
  \\+ \int_0^T\int_{\S^2}\int_0^\infty \frac{(1+r)^p}{r}e^{-\sigma_U(t-r)}|\ang(ru)|^2\,dr\,d\omega\,dt.
\end{multline*}
Subsequent integrations by parts then give
\begin{multline}
  \label{plus}
  \int_0^T\int (1+r)^p e^{-\sigma_U(t-r)}\Box u
  \Bigl(\partial_t+\partial_r+\frac{1}{r}\Bigr)u\,dx\,dt
  \\=\frac{1}{2}\int_{\S^2}\int_0^\infty (1+r)^p e^{-\sigma_U(t-r)}
  \Bigl\{\Bigl[\Bigl(\partial_t+\partial_r\Bigr)(ru)\Bigr]^2+|\ang(ru)|^2\Bigr\}\,dr\,d\omega
  \Bigl|_{t=0}^T 
  \\
+ \frac{1}{2}\int_0^T \int_{\S^2} e^{-\sigma_U(t)} u^2(t,0)\,d\omega\,dt
  +  \frac{p}{2}\int_0^T\int_{\S^2}\int_0^\infty (1+r)^{p-1} e^{-\sigma_U(t-r)}
  \Bigl[\Bigl(\partial_t+\partial_r\Bigr)(ru)\Bigr]^2\,dr\,d\omega\,dt
  \\+ \int_0^T\int_{\S^2}\int_0^\infty (1+r)^p \sigma'_U(t-r)
  e^{-\sigma_U(t-r)} \Bigl[\Bigl(\partial_t+\partial_r\Bigr)(ru)\Bigr]^2\,dr\,d\omega\,dt
  \\+ \Bigl(1-\frac{p}{2}\Bigr)\int_0^T\int_{\S^2}\int_0^\infty
  \frac{(r+1)^p}{r}e^{-\sigma_U(t-r)}|\ang(ru)|^2\,dr\,d\omega\,dt
\\+ \frac{p}{2}\int_0^T\int_{\S^2}\int_0^\infty
  \frac{(1+r)^{p-1}}{r}e^{-\sigma_U(t-r)}|\ang(ru)|^2\,dr\,d\omega\,dt. 
\end{multline}
Rearranging the terms, noting that
\[\sigma_U'(t-r)\gtrsim \frac{1}{\la t-r\ra},\quad \text{ on } X_U,\]
and taking a supremum over $T$ yields
\begin{multline*}
  \|\la r\ra^{\frac{p}{2}}
  r^{-1}(\partial_t+\partial_r)(ru)\|^2_{L^\infty L^2} + \|\la
  r\ra^{\frac{p}{2}}\ang u\|^2_{L^\infty L^2} + \|\la
  r\ra^{\frac{p-1}{2}} r^{-1}(\partial_t+\partial_r)(ru)\|^2_{L^2L^2}
  \\+ \|\la r\ra^{\frac{p}{2}} r^{-\frac{1}{2}} \ang u\|_{L^2L^2}^2
  + \sup_U \|\la r\ra^{\frac{p}{2}} \la t-r\ra^{-\frac{1}{2}}
  r^{-1}(\partial_t+\partial_r)(ru)\|^2_{L^2 L^2(X_U)}
 \\ \lesssim 
\|\la r\ra^{\frac{p}{2}}
  r^{-1}u(0,\cd)\|^2_{L^2} + 
\|\la
  r\ra^{\frac{p}{2}}\good u(0,\cd)\|^2_{L^2} + \int_0^\infty \int \la
  r\ra^p |\Box u| |r^{-1}(\partial_t+\partial_r)(ru)|\,dx\,dt.
\end{multline*}
By the Schwarz inequality, we may bound
\begin{multline}\label{forcing}
  \int_0^\infty \int \la
  r\ra^p |\Box u| |r^{-1}(\partial_t+\partial_r)(ru)|\,dx\,dt
  \lesssim \Bigl(\sum_\tau \sum_{R\le \tau/4} \|\la r\ra^{\frac{p+1}{2}}
  \Box u\|^2_{L^2L^2(C^R_\tau)}\Bigr)^{\frac{1}{2}} \|\la
  r\ra^{\frac{p-1}{2}} r^{-1}(\partial_t+\partial_r)(ru)\|_{L^2L^2}
  \\+ \Bigl\{\sum_U \Bigl[\sum_{\tau\ge 4U} U \|\la
  r\ra^{\frac{p}{2}} \Box
  u\|_{L^2L^2(C^U_\tau)}^2\Bigr]^{\frac{1}{2}}\Bigr\} \Bigl(\sup_U U^{-\frac{1}{2}}\|\la
  r\ra^{\frac{p}{2}} 
  r^{-1}(\partial_t+\partial_r)(ru)\|_{L^2L^2(X_U)}\Bigr).
\end{multline}
The second factor of each term may be bootstrapped.  Combining what
results with \eqref{Hardy} completes the proof.
\end{proof}

We next combine the previous proposition with a modification of the
$(\partial_t-\partial_r)$ portion of the multiplier in \eqref{multiplier}.  While the new
$\partial_t-\partial_r$ terms are easily controlled using the $LE^1$
norm, the corresponding forcing term comes with an added factor of
decay, $(1+r)^{-\delta}$, when compared to the right side of \eqref{LE}.
\begin{theorem}
  Fix $0<p<2$ and $\delta>0$.  If $u\in C^2(\R_+\times\R^3)$ and
$r^{\frac{p+2}{2}}|\partial^{\le 1} u(t,x)|\to 0$ as $|x|\to \infty$,
then
\begin{multline}
  \label{newLE}
  \|\la
  r\ra^{-\frac{\delta}{2}}(\partial_t-\partial_r)u\|_{L^\infty
    L^2} + \|\la r\ra^{\frac{p}{2}}
  (\partial_t+\partial_r)u\|_{L^\infty L^2} + \|\la
  r\ra^{\frac{p}{2}}\ang u\|_{L^\infty L^2}
+ \|\la r\ra^{\frac{p-1}{2}}r^{-\frac{1}{2}} u\|_{L^\infty L^2}
  \\+ \|\la r\ra^{-\frac{1+\delta}{2}}
 (\partial_t-\partial_r)u\|_{L^2L^2} + \|\la
  r\ra^{\frac{p-1}{2}} (\partial_t+\partial_r)u\|_{L^2L^2} +
  \|\la r\ra^{\frac{p-1}{2}}\ang u\|_{L^2L^2}
  +\|\la r\ra^{\frac{p-1}{2}}r^{-1} u\|_{L^2L^2}
  \\+ \sup_U U^{-\frac{1}{2}} \|\la r\ra^{\frac{p}{2}}
  r^{-1}(\partial_t+\partial_r)(ru)\|_{L^2L^2(X_U)}
  \lesssim
  \|\la
  r\ra^{-\frac{\delta}{2}}(\partial_t-\partial_r)u(0,\cd)\|_{L^2} \\+
  \|\la r\ra^{\frac{p}{2}} (\partial_t+\partial_r)u(0,\cd)\|_{L^2} +
\|\la r\ra^{\frac{p}{2}}\ang u(0,\cd)\|_{L^2}+
\|\la r\ra^{\frac{p}{2}}r^{-1} u(0,\cd)\|_{L^2}
\\+ \|\la r\ra^{\frac{1-\delta}{2}}\Box u\|_{L^2L^2}
+ \Bigl(\sum_\tau \sum_{R\le \tau/4} \|\la r\ra^{\frac{1+p}{2}} \Box
u\|^2_{L^2L^2(C^R_\tau)}\Bigr)^{\frac{1}{2}}
+\sum_U \Bigl(\sum_{\tau \ge 4U} U \|\la
r\ra^{\frac{p}{2}} \Box u\|^2_{L^2L^2(C^U_\tau)}\Bigr)^{\frac{1}{2}}.
\end{multline}
\end{theorem}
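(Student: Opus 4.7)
The plan is to derive a new energy identity by pairing $\Box u$ with the modified multiplier $(1+r)^{-\delta}(\partial_t-\partial_r-1/r)u$, and then to combine the result with the identity underlying \eqref{MR} and with the Hardy estimate \eqref{Hardy}. Writing $v=ru$, using \eqref{conjugate}, and switching from $dx$ to $dr\,d\omega$, the forcing integral reduces to
\[
\int_0^T\!\!\int_0^\infty\!\!\int_{\S^2}(1+r)^{-\delta}\bigl(\partial_t^2-\partial_r^2-\ang\cdot\ang\bigr)v\cdot(\partial_t-\partial_r)v\,d\omega\,dr\,dt.
\]
As in the proof of \eqref{MR}, the flat piece factors as $\tfrac12(\partial_t+\partial_r)[(\partial_t-\partial_r)v]^2$, while the angular piece, after integration by parts on $\S^2$ and an application of $[\ang,\partial_r]=r^{-1}\ang$ from \eqref{angCommutator}, produces $\tfrac12(\partial_t-\partial_r)|\ang v|^2-r^{-1}|\ang v|^2$.

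Next I would integrate by parts in $t$ and $r$, using $-\partial_r(1+r)^{-\delta}=\delta(1+r)^{-\delta-1}$, together with $(\partial_t-\partial_r)v|_{r=0}=-u(t,0)$ and $|\ang v|^2|_{r=0}=0$. Reconverting to $dx$, the $t$-boundary yields $\tfrac12\bigl[\|(1+r)^{-\delta/2}(\partial_t-\partial_r-1/r)u\|_{L^2}^2+\|(1+r)^{-\delta/2}\ang u\|_{L^2}^2\bigr]\Big|_0^T$; the spacetime correction contributes positively as $\tfrac{\delta}{2}\|(1+r)^{-(1+\delta)/2}(\partial_t-\partial_r-1/r)u\|_{L^2L^2}^2$ but gives wrong-sign pieces $-\tfrac{\delta}{2}\|(1+r)^{-(1+\delta)/2}\ang u\|_{L^2L^2}^2-\|(1+r)^{-\delta/2}r^{-1/2}\ang u\|_{L^2L^2}^2$; and the $r=0$ boundary produces $-\tfrac12\int_0^T\int_{\S^2}u(t,0)^2\,d\omega\,dt$. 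The forcing on the left is then estimated by Cauchy--Schwarz with the weight split $(1+r)^{-\delta}=(1+r)^{(1-\delta)/2}\cdot(1+r)^{-(1+\delta)/2}$, which produces precisely the $\|\la r\ra^{(1-\delta)/2}\Box u\|_{L^2L^2}$ factor required by \eqref{newLE}, paired against the bootstrappable $\|\la r\ra^{-(1+\delta)/2}(\partial_t-\partial_r-1/r)u\|_{L^2L^2}$.

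To close, I would add this new identity to the one yielding \eqref{MR}. The wrong-sign spacetime angular contributions pose no difficulty: since $p,\delta>0$, their weights are pointwise dominated by $\la r\ra^{p-1}$ and $\la r\ra^p/r$, hence absorbed by the good angular quantities in \eqref{MR}. The triangle inequality, together with \eqref{Hardy} and a standard Hardy bound $\|u/r\|_{L^2(B_1)}\lesssim\|\partial u\|_{L^2(B_1)}$ near the origin, converts all $(\partial_t-\partial_r-1/r)u$ norms into the $(\partial_t-\partial_r)u$ norms displayed in \eqref{newLE}. The hard part will be the trace $\int u(t,0)^2\,d\omega\,dt$ produced by the singular $-1/r$ portion of the multiplier at the spatial origin; I expect to overcome this by noticing that $\sigma_U(t)\in[0,1)$ gives $e^{-\sigma_U(t)}\ge e^{-1}$, so
\[
\int_0^T\int_{\S^2}u(t,0)^2\,d\omega\,dt\le e\int_0^T\int_{\S^2}e^{-\sigma_U(t)}u(t,0)^2\,d\omega\,dt,
\]
and the right-hand side is precisely the positive trace term appearing on the left of \eqref{plus}, already controlled by the forcing terms of \eqref{MR}.
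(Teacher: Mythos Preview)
Your proposal is correct and follows essentially the same route as the paper: derive the identity from the multiplier $(1+r)^{-\delta}(\partial_t-\partial_r-1/r)$, combine with \eqref{plus}, absorb the wrong-sign angular and $r=0$ trace contributions, apply Cauchy--Schwarz to the forcing, and finish with \eqref{Hardy}. The only organizational difference is that the paper multiplies \eqref{plus} by a large constant $C\gg 1$ before adding it to the minus identity, so that the combined trace term $\tfrac{C}{2}\int e^{-\sigma_U(t)}u^2(t,0)-\tfrac12\int u^2(t,0)$ and the combined angular spacetime terms are manifestly nonnegative and can simply be dropped; you instead bound the bad trace and angular pieces separately by quantities already controlled in \eqref{plus}/\eqref{MR}, which is an equivalent way to close the argument (your observation $e^{-\sigma_U(t)}\ge e^{-1}$ is exactly what makes the constant $C$ finite).
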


\begin{proof}
  Using \eqref{conjugate} and the related identity
\[\Bigl(\partial_t-\partial_r-\frac{1}{r}\Bigr)u = r^{-1}\Bigl(\partial_t-\partial_r\Bigr)(ru),\]
  we begin by considering
  \[\int_0^T\int (1+r)^{-\delta} \Box u
    \Bigl(\partial_t-\partial_r-\frac{1}{r}\Bigr)u\,dx\,dt
    = \int_0^T\int_{\S^2}\int_0^\infty
    (1+r)^{-\delta}\Bigl(\partial_t^2-\partial_r^2-\ang\cdot\ang\Bigr)(ru)
    \Bigl(\partial_t-\partial_r\Bigr)(ru)\,dr\,d\omega\,dt\]
  with $\delta>0$.
  Integrating by parts and using \eqref{angCommutator}, this is
  \begin{multline*}=\frac{1}{2}\int_0^T\int_{\S^2}\int_0^\infty
    (1+r)^{-\delta}\Bigl(\partial_t+\partial_r\Bigr)\Bigl[\Bigl(\partial_t-\partial_r\Bigr)
    (ru)\Bigr]^2\,dr\,d\omega\,dt
   -\int_0^T\int_{\S^2}\int_0^\infty
   \frac{(1+r)^{-\delta}}{r}|\ang(ru)|^2\,dr\,d\omega\,dt
   \\+\frac{1}{2} \int_0^T\int_{\S^2}\int_0^\infty
   (1+r)^{-\delta}\Bigl(\partial_t-\partial_r\Bigr)|\ang(ru)|^2\,dr\,d\omega\,dt.  
 \end{multline*}
The Fundamental Theorem of Calculus and subsequent integrations by
parts give
\begin{multline}\label{minus}
\int_0^T\int (1+r)^{-\delta} \Box u
    \Bigl(\partial_t-\partial_r-\frac{1}{r}\Bigr)u\,dx\,dt =
    \frac{1}{2}\int_{\S^2}\int_0^\infty 
  (1+r)^{-\delta}\Bigl\{\Bigl[\Bigl(\partial_t-\partial_r\Bigr)(ru)\Bigr]^2
  + |\ang (ru)|^2\Bigr\}\,dr\,d\omega\Bigl|_{t=0}^T
  \\- \frac{1}{2}\int_0^T \int_{\S^2} u^2(t,0)\,d\omega\,dt
  +\frac{\delta}{2}\int_0^T\int_{\S^2}\int_0^\infty
  (1+r)^{-1-\delta}\Bigl[\Bigl(\partial_t-\partial_r\Bigr)(ru)\Bigr]^2\,
  dr\,d\omega\,dt   \\-\int_0^T\int_{\S^2}\int_0^\infty
   (1+r)^{-\delta} \Bigl(r^{-1}+\frac{\delta}{2}
   (1+r)^{-1}\Bigr)|\ang(ru)|^2\,dr\,d\omega\,dt. 
\end{multline}

We now consider a multiplier of the form
\[(1+r)^{-\delta}\Bigl(\partial_t-\partial_r-\frac{1}{r}\Bigr) +
  C(1+r)^p
  e^{-\sigma_U(t-r)}\Bigl(\partial_t+\partial_r+\frac{1}{r}\Bigr),\quad
  C\gg 1,\]
by adding a large multiple of \eqref{plus} to \eqref{minus}.  Since
$\sigma_U$ is bounded independently of $U$, for a sufficiently large
$C$,
\[\frac{C}{2}\int_0^T\int_{\S^2} e^{-\sigma_U(t)}
  u^2(t,0)\,d\omega\,dt - \frac{1}{2}\int_0^T\int_{\S^2}
  u^2(t,0)\,d\omega\,dt \ge 0,\]
and as such, this $r=0$ boundary term may be dropped.  The nonnegative
contribution
\[\frac{1}{2}\int_{\S^2}\int_0^\infty (1+r)^{-\delta}
  |\ang(ru)(T,r\omega)|^2\,dr\,d\omega\]
may also be omitted, and since $\delta, p > 0$, we can simplify by
bounding
\[\frac{1}{2}\int_{\S^2}\int_0^\infty (1+r)^{-\delta}
  |\ang(ru)(0,r\omega)|^2\,dr\,d\omega
\le \frac{1}{2}\int_{\S^2}\int_0^\infty (1+r)^{p}
  |\ang(ru)(0,r\omega)|^2\,dr\,d\omega.
\]
What then results from this combination of \eqref{minus} and
\eqref{plus} is
\begin{multline}\label{newLE1}
  \Bigl\|(1+r)^{-\frac{\delta}{2}}r^{-1}
 \Bigl(\partial_t-\partial_r\Bigr)(ru)\Bigl\|^2_{L^\infty L^2}
+\Bigl\|(1+r)^{\frac{p}{2}}
r^{-1}\Bigl(\partial_t+\partial_r\Bigr)(ru)\Bigr\|^2_{L^\infty L^2}
+ \Bigl\|(1+r)^{\frac{p}{2}}\ang u\Bigr\|^2_{L^\infty L^2} \\+\Bigl\|(1+r)^{-\frac{1}{2}-\frac{\delta}{2}}
r^{-1}\Bigl(\partial_t-\partial_r\Bigr)(ru)\Bigr\|^2_{L^2 L^2}
+ \Bigl\|(1+r)^{\frac{p-1}{2}}
r^{-1}\Bigl(\partial_t+\partial_r\Bigr)(ru)\Bigr\|^2_{L^2 L^2}
+\Bigl\|(1+r)^{\frac{p-1}{2}} \ang u\Bigr\|^2_{L^2 L^2}
\\+ \sup_U U^{-1} \Bigl\|(1+r)^{\frac{p}{2}} r^{-1}\Bigl(\partial_t+\partial_r\Bigr)(ru)\Bigr\|^2_{L^2L^2(X_U)}
  \\\lesssim
\Bigl\|(1+r)^{-\frac{\delta}{2}} r^{-1}
\Bigl(\partial_t-\partial_r\Bigr)(ru)(0,\cd)\Bigr\|^2_{L^2} 
+\Bigl\|(1+r)^{\frac{p}{2}}
r^{-1}\Bigr(\partial_t+\partial_r\Bigr)(ru)(0,\cd)\Bigr\|^2_{L^2}
+ \Bigl\|(1+r)^{\frac{p}{2}} \ang u(0,\cd)\Bigr\|^2_{L^2}
\\+  \int_0^\infty\int |\Box
  u|\Bigl\{(1+r)^{-\delta}\Bigl|\Bigl(\partial_t-\partial_r-\frac{1}{r}\Bigr)u\Bigr|
  + (1+r)^p \Bigl|\Bigl(\partial_t+\partial_r+\frac{1}{r}\Bigr)u\Bigr|\Bigr\}\,dx\,dt.
\end{multline}
We use \eqref{forcing} and the
fact that the Schwarz inequality allows us to bound
\[
  \int_0^\infty\int |\Box
  u|
  (1+r)^{-\delta}\Bigl|\Bigl(\partial_t-\partial_r-\frac{1}{r}\Bigr)u\Bigr|\,dx\,dt
  \lesssim \|(1+r)^{\frac{1-\delta}{2}} \Box u\|_{L^2L^2}
  \|(1+r)^{-\frac{1+\delta}{2}} r^{-1}(\partial_t-\partial_r)(ru)\|_{L^2L^2}.
\]
Bootstrapping then gives
\begin{multline*}
  \|\la
  r\ra^{-\frac{\delta}{2}}r^{-1}(\partial_t-\partial_r)(ru)\|_{L^\infty
    L^2} + \|\la r\ra^{\frac{p}{2}}
  r^{-1}(\partial_t+\partial_r)(ru)\|_{L^\infty L^2} + \|\la
  r\ra^{\frac{p}{2}}\ang u\|_{L^\infty L^2}
  \\+ \|\la r\ra^{-\frac{1+\delta}{2}}
  r^{-1}(\partial_t-\partial_r)(ru)\|_{L^2L^2} + \|\la
  r\ra^{\frac{p-1}{2}} r^{-1}(\partial_t+\partial_r)(ru)\|_{L^2L^2} +
  \|\la r\ra^{\frac{p-1}{2}}\ang u\|_{L^2L^2}
  \\+ \sup_U U^{-\frac{1}{2}} \|\la r\ra^{\frac{p}{2}}
  r^{-1}(\partial_t+\partial_r)(ru)\|_{L^2L^2(X_U)}
  \lesssim
  \|\la
  r\ra^{-\frac{\delta}{2}}(\partial_t-\partial_r)u(0,\cd)\|_{L^2} \\+
  \|\la r\ra^{\frac{p}{2}} (\partial_t+\partial_r)u(0,\cd)\|_{L^2} +
\|\la r\ra^{\frac{p}{2}}\ang u(0,\cd)\|_{L^2}+
\|\la r\ra^{\frac{p}{2}}r^{-1} u(0,\cd)\|_{L^2}
\\+ \|\la r\ra^{\frac{1-\delta}{2}}\Box u\|_{L^2L^2}
+ \Bigl(\sum_\tau \sum_{R\le \tau/4} \|\la r\ra^{\frac{1+p}{2}} \Box
u\|^2_{L^2L^2(C^R_\tau)}\Bigr)^{\frac{1}{2}}
+\sum_U \Bigl(\sum_{\tau \ge 4U} U \|\la
r\ra^{\frac{p}{2}} \Box u\|^2_{L^2L^2(C^U_\tau)}\Bigr)^{\frac{1}{2}}.
\end{multline*}
Pairing this with \eqref{Hardy} then completes the proof.
\end{proof}

%%%%%%%%%%%%%%%%%%%%%%%%%%%%%%%%%%%%%%%%%%%%%%%%%%  
%%%%%%%%%%%%%%%%%%%%%%%%%%%%%%%%%%%%%%%%%%%%%%%%%%  
%%%%%%%%%%%%%%%%%%%%%%%%%%%%%%%%%%%%%%%%%%%%%%%%%%  

\section{Sobolev Estimates}
In this section, we collect our principal decay estimates, which are
variants of the Klainerman-Sobolev estimate \cite{klainermanSob}.

On occasion, it will suffice to apply the following standard weighted Sobolev
estimate, which is also from \cite{klainermanSob} and follows by applying Sobolev
embeddings in the $r$ and $\omega$ variables after localizing.
\begin{lemma}
For $h\in
  C^\infty(\R^3)$ and $R>0$,
  \begin{equation}
    \label{weightedSob}
    \|h\|_{L^\infty(A_R)} \lesssim R^{-1} \|Z^{\le 2} h\|_{L^2(\tilde{A}_R)}.
  \end{equation}
\end{lemma}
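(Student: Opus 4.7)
The plan is a standard Klainerman--Sobolev argument: reduce to the sphere via a $2$-dimensional Sobolev embedding using the rotations $\Omega$, and then close with a $1$-dimensional Sobolev/fundamental-theorem-of-calculus argument in the radial variable. The factor $R^{-1}$ emerges from the Jacobian identity $r^2\,dr\,d\omega = d^3x$: on the support of the cutoff $r^2 \approx R^2$, so converting back and forth between $L^2(dr\,d\omega)$ and $L^2(\R^3)$ costs a factor of $R$. Since for $R = O(1)$ the bound is just standard 3D Sobolev, I would assume $R$ is large and localize appropriately.

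First I would fix a smooth radial cutoff $\eta(x) = \beta(\la x\ra /R)$, so that $\eta \equiv 1$ on $A_R$, $\eta$ is supported in $\tilde{A}_R$, and $|\partial_r^k \eta| \lesssim R^{-k}$. Set $\td h = \eta\,h$. Because $\eta$ is radial, the rotations $\Omega$ annihilate it, while each $\partial_r$ falling on $\eta$ produces at most a factor of $R^{-1}$, which is harmless. It therefore suffices to bound $|\td h(x)|$ pointwise by $R^{-1}$ times the $L^2(\R^3)$ norms of $\Omega^{\le 2}\td h$ and $\partial_r \Omega^{\le 2}\td h$.

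Now write $g(r,\omega) = \td h(r\omega)$, which is supported in $r \in [7R/8,17R/8]$. Sobolev embedding on $\S^2$ (2-dimensional, two derivatives) yields
\[\sup_{\omega \in \S^2} |g(r,\omega)|^2 \lesssim \sum_{|\alpha|\le 2} \int_{\S^2} |\Omega^\alpha g(r,\omega)|^2 \,d\omega.\]
For each fixed $\alpha$, the fundamental theorem of calculus combined with Cauchy--Schwarz in $r$ gives
\[\int_{\S^2}|\Omega^\alpha g(r,\omega)|^2\,d\omega \lesssim \Bigl(\int_0^\infty\!\!\int_{\S^2}|\Omega^\alpha g|^2\,d\omega\,ds\Bigr)^{\!1/2}\Bigl(\int_0^\infty\!\!\int_{\S^2}|\partial_s\Omega^\alpha g|^2\,d\omega\,ds\Bigr)^{\!1/2}.\]
Using $r^2 \approx R^2$ on the support of $g$ to convert $\|\cdot\|^2_{L^2(dr\,d\omega)} \approx R^{-2}\|\cdot\|^2_{L^2(\R^3)}$, and combining, I obtain for each $x \in A_R$
\[|h(x)|^2 \lesssim R^{-2}\sum_{|\alpha|\le 2}\|\Omega^\alpha \td h\|_{L^2(\R^3)}\|\partial_r\Omega^\alpha\td h\|_{L^2(\R^3)} \lesssim R^{-2}\|Z^{\le 2} h\|^2_{L^2(\tilde A_R)},\]
since $\partial_r = (x/r)\cdot\nabla$ is controlled by $\nabla \in Z$ and the commutator of $\partial_r$ and $\Omega$ produces only $Z$ terms.

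There is no substantive obstacle; the estimate is routine. The only bookkeeping worth care is the $R$-weight from the volume element (which is exactly what produces the $R^{-1}$ gain in three spatial dimensions) and the observation that the cutoff $\eta$ is radial, so that derivatives of $\eta$ never introduce a $Z^3$ term.
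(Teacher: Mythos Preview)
Your overall strategy is the right one and matches the paper's terse description, but there is a derivative-counting error in the final step. You apply the two-derivative Sobolev embedding on $\S^2$ and then the one-dimensional FTC in $r$, producing the term $\partial_r\Omega^\alpha \td h$ with $|\alpha|\le 2$. For $|\alpha|=2$ this is a third-order operator, and your claim that it is controlled by $\|Z^{\le 2}h\|_{L^2}$ is false: writing $\partial_r=(x/r)\cdot\nabla$ costs one $Z$, and $\Omega^2$ costs two more, so $\partial_r\Omega^2$ genuinely requires $Z^{\le 3}$. Your argument therefore proves only $\|h\|_{L^\infty(A_R)}\lesssim R^{-1}\|Z^{\le 3}h\|_{L^2(\tilde A_R)}$, one derivative worse than stated.

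The repair is to avoid splitting the embedding and instead exploit the scaling field $S$, which you never invoke. After localizing, rescale $r=R\rho$ so that $\td h$ lives on the unit annulus $\{7/8\le\rho\le 17/8\}$, and apply a single three-dimensional Sobolev embedding $H^2\hookrightarrow L^\infty$ using the frame $(\partial_\rho,\Omega)$ with total order $j+|\alpha|\le 2$. Undoing the rescaling, $\partial_\rho=R\partial_r\approx r\partial_r=S$ on the support (recall $h$ is $t$-independent, so $Sh=r\partial_r h$), and the volume element contributes $R^{-3/2}$. Thus every term is bounded by $R^{-3/2}\|S^{\le j}\Omega^\alpha h\|_{L^2(\tilde A_R)}\le R^{-3/2}\|Z^{\le 2}h\|_{L^2(\tilde A_R)}$, which is even stronger than the stated $R^{-1}$. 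The paper's sketch (``Sobolev embeddings in the $r$ and $\omega$ variables after localizing'') should be read as this joint embedding; the same use of $S$ appears explicitly in the proof of the next lemma via the change of variables $t=e^s$, $r=e^{s+\rho}$.
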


Where a finer analysis is necessary, we shall use the space-time
Klainerman-Sobolev estimates of \cite[Lemma 3.8]{MTT}.  We record
these in the following lemma.
\begin{lemma}
If $\tau \ge 1$, $1\le R\le\tau/2$, and $1\le U\le \tau/4$, then
  \begin{align}
    \label{MTT_R}
    \|w\|_{L^\infty L^\infty(C^R_\tau)}&\lesssim \frac{1}{\tau^{\frac{1}{2}}R^{\frac{3}{2}}}\|Z^{\le 2}
                                         w\|_{L^2L^2(\tilde{C}^R_\tau)}
                                         + \frac{1}{\tau^{\frac{1}{2}}
                                         R^{\frac{1}{2}}}
                                         \|\partial_r
                                         Z^{\le 2}
                                         w\|_{L^2L^2(\tilde{C}^R_\tau)}, 
                                       \\
    \label{MTT_U}
  \|w\|_{L^\infty L^\infty(C^U_\tau)} &\lesssim
  \frac{1}{\tau^{\frac{3}{2}}U^{\frac{1}{2}}} \|Z^{\le 2} w\|_{L^2L^2(\tilde{C}^U_\tau)} +
  \frac{U^{\frac{1}{2}}}{\tau^{\frac{3}{2}}} \|\partial_r Z^{\le 2} w\|_{L^2L^2(\tilde{C}^U_\tau)}.
\end{align}
\end{lemma}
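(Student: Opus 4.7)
Since the statement is quoted from \cite[Lemma 3.8]{MTT}, my plan is to reproduce their argument: combine a refined spatial Klainerman--Sobolev at each fixed time with a one-dimensional Sobolev in $t$, then absorb the resulting $\tau^{1/2}\partial_t$ factor through the scaling identity $t\partial_t+r\partial_r=S$.

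After cutting $w$ off to a slight enlargement of $C^R_\tau$ (respectively $C^U_\tau$) by a smooth cutoff, the first step is a refined spatial estimate obtained from $H^2(\mathbb{S}^2)\hookrightarrow L^\infty(\mathbb{S}^2)$ (using $\Omega$ as the unit angular derivative) followed by a one-dimensional Sobolev on the radial slab of width $R$ (respectively $U$). On $C^R_\tau$ this yields, for each fixed $t$,
\[
\|w(t,\cdot)\|_{L^\infty(A_R)} \lesssim R^{-3/2}\|\Omega^{\le 2} w(t,\cdot)\|_{L^2(\tilde{A}_R)} + R^{-1/2}\|\partial_r \Omega^{\le 2} w(t,\cdot)\|_{L^2(\tilde{A}_R)},
\]
while the $C^U_\tau$ analog carries prefactors $\tau^{-1}U^{-1/2}$ and $\tau^{-1}U^{1/2}$ to account for the sphere of radius $\approx\tau$ and the radial width $U$.

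Next apply one-dimensional Sobolev in $t$ over $[\tau,2\tau]$, namely $\|h\|_{L^\infty_t}\lesssim \tau^{-1/2}\|h\|_{L^2_t}+\tau^{1/2}\|\partial_t h\|_{L^2_t}$, and swap $L^\infty_x$ past $L^2_t$ by Minkowski. Two of the resulting pieces match the target directly. The remaining two $\tau^{1/2}\partial_t$-type pieces are handled by the trade $\tau\partial_t=(\tau/t)(S-r\partial_r)$ on $C^R_\tau$ (with $\tau/t$ bounded), so that $\tau\|\partial_t h\|_{L^2}\lesssim \|Sh\|_{L^2}+R\|\partial_r h\|_{L^2}$. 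The commutator facts $[\Omega,S]=0$ and $[\Omega,\partial_r]=0$---the latter following from the identity $\Omega=r$ times the angular gradient together with \eqref{angCommutator}---allow the trade to pass cleanly through the $\Omega^{\le 2}$ prefactor. The $C^U_\tau$ case is parallel, using the null-frame identity $r(\partial_t+\partial_r)=S-(t-r)\partial_t$ with $r\approx\tau$ and $t-r\approx U$.

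The principal obstacle is the careful index bookkeeping after the trade: one must absorb $\|\Omega^{\le 2}Sw\|_{L^2}=\|S\Omega^{\le 2} w\|_{L^2}$ into the first target norm $\|Z^{\le 2} w\|_{L^2}$, with the lower-order commutator errors generated by $[\ang,\partial_r]=r^{-1}\ang$ rendered harmless by the explicit $r^{-1}\sim R^{-1}$ (or $\tau^{-1}$) factors that they carry.
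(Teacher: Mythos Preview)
Your derivative count is off by one, and the sentence ``absorb $\|\Omega^{\le 2}Sw\|_{L^2}=\|S\Omega^{\le 2} w\|_{L^2}$ into the first target norm $\|Z^{\le 2} w\|_{L^2}$'' is where it breaks: $\Omega^{2}S$ is a product of \emph{three} admissible vector fields, not two, so this term simply does not fit into $\|Z^{\le 2}w\|_{L^2}$. The problem is worse for the cross term $\tau^{1/2}R^{-1/2}\|\partial_t\partial_r\Omega^{\le 2}w\|_{L^2L^2}$ that your scheme also produces: after the trade $\tau\partial_t\sim S-r\partial_r$ you are left with $\tau^{-1/2}R^{-1/2}\|\partial_r S\Omega^{\le 2}w\|$ (which is $\partial_r Z^{\le 3}$) and $\tau^{-1/2}R^{1/2}\|\partial_r^2\Omega^{\le 2}w\|$ (two copies of $\partial_r$), neither of which is controlled by the stated right-hand side.

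The underlying issue is that you run three separate Sobolev steps --- $H^2(\mathbb{S}^2)$ in the angles, $H^1$ in $r$, and $H^1$ in $t$ --- at a cost of $2+1+1=4$ derivatives. The paper avoids this by passing to logarithmic coordinates $t=e^s$, $r=e^{s+\rho}$ (respectively $t-r=e^{s+\rho}$ for $C^U_\tau$), applying a \emph{single} $H^2\hookrightarrow L^\infty$ embedding in the three variables $(s,\omega)$ jointly, and then the Fundamental Theorem of Calculus in $\rho$. Because $\partial_s$ acts on the composed function as $S$ and $\partial_\omega$ as $\Omega$, the joint $(s,\omega)$ embedding costs only two $Z$ vector fields; the remaining $\partial_\rho$ becomes $r\partial_r$. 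After changing variables back this yields exactly the $Z^{\le 2}$ and $\partial_r Z^{\le 2}$ contributions in \eqref{MTT_R}--\eqref{MTT_U}. The essential saving is that a three-dimensional Sobolev embedding in $(s,\omega)$ needs only two derivatives, whereas your separated $2$D-plus-$1$D treatment of $\omega$ and $t$ spends three.
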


We shall only tersely describe the proof since this result previously
appeared in \cite{MTT}.
If $R=1$, \eqref{MTT_R} is an immediate consequence of standard
Sobolev embeddings.  And if $1<R\lesssim \tau$, then after localizing,
we may apply Sobolev embeddings in $(s,\omega)$ and the 
Fundamental Theorem of Calculus in $\rho$ where
$t=e^s$ and $r=e^{s+\rho}$.  This gives that
\[\beta\Bigl(\frac{e^s}{\tau}\Bigr)\beta(\frac{e^{s+\rho}}{R}\Bigr) |w(e^s,
  e^{s+\rho}\omega)|
  \lesssim \Bigl(\int \int\int \Bigl|\partial_\rho
  \bigl|\partial_{s,\omega}^{\le 2}
  \Bigl[\beta(\frac{e^s}{\tau}\Bigr)\beta(\frac{e^{s+\rho}}{R}\Bigr)
  w(e^s,
  e^{s+\rho}\omega)\Bigr]\bigr|^2\Bigl|\,d\rho\,ds\,d\omega\Bigr)^{1/2}.\] 
Relying on the observations that
\[\partial_s (w(e^s, e^{s+\rho}\omega)) = (Sw)(e^s, e^{s+\rho}
  \omega),\quad |\partial_\omega (w(e^s,e^{s+\rho}\omega)|\lesssim
  |(\Omega w)(e^s, e^{s+\rho}\omega)|,\]
  \[\partial_\rho (w(e^s,
    e^{s+\rho}\omega)) = (r\partial_r w)(e^s, e^{s+\rho}\omega),\]
upon changing variables in the integrals, we see that
\begin{equation}
  \label{MTT_R2}
  \|w\|_{L^\infty L^\infty(C^R_\tau)} \lesssim
  \frac{1}{\tau^{\frac{1}{2}}R^{\frac{3}{2}}} \|Z^{\le 2}
  w\|_{L^2L^2(\tilde{C}^R_\tau)} + \frac{1}{\tau^{\frac{1}{2}}R}
  \|Z^{\le 2} w\|^{\frac{1}{2}}_{L^2L^2(\tilde{C}^R_\tau)} \|\partial_r
  Z^{\le 2} w\|^{\frac{1}{2}}_{L^2L^2(\tilde{C}^R_\tau)}.
\end{equation}
The estimate \eqref{MTT_R} is now an immediate consequence.  Moreover,
if $R=\tau/2$, if we replace $w$ by $\chi\Bigl(\frac{2(t-r)}{\tau}\Bigr)
w$ and note that
$S\Bigl(\chi\Bigl(\frac{2(t-r)}{\tau}\Bigr)\Bigr)=\O(1)$, the estimate
for $C^{\tau/2}_\tau$ also follows.

When $U=1$, the bound \eqref{MTT_U} follows from \eqref{MTT_R2}.
Otherwise, with $t=e^s$ and $t-r=e^{s+\rho}$, a similar application
of Sobolev embeddings yields \eqref{MTT_U}.

When the estimates of \cite{MTT} are applied to $\partial w$, the
decomposition pairs nicely with 
\begin{equation}
  \label{S}
 2Sw=(t+r)(\partial_t+\partial_r)w+(t-r)(\partial_t-\partial_r)w,
\end{equation}
which will allow us to recover $\Box$ to get additional decay out of
the second derivative terms.  This represents space-time analogs of
some estimates of \cite{KlSid}.  See, also, \cite{Lindblad-Tohaneanu}
where some similar analyses appeared previously.

\begin{corollary} For $\tau \ge 1$ and $1\le R\le \tau/2$, $1\le U\le \tau/4$, we have
\begin{align}
  \label{CRT}
 \|\partial w\|_{L^\infty L^\infty(C^R_\tau)} &\lesssim \frac{1}{\tau^{\frac{1}{2}}R^{\frac{3}{2}}}
 \|\partial Z^{\le 3} w\|_{L^2L^2(\tC^R_\tau)} + \frac{1}{\tau^{\frac{1}{2}}R^{\frac{1}{2}}}\|\Box
 Z^{\le 2} w\|_{L^2L^2(\tC^R_\tau)},\\
  \label{CUT}
 \|\partial w\|_{L^\infty L^\infty(C^U_\tau)} &\lesssim \frac{1}{U^{\frac{1}{2}}\tau^{\frac{3}{2}}}
 \|\partial Z^{\le 3} w\|_{L^2L^2(\tC^U_\tau)} +
 \frac{1}{U^{\frac{1}{2}} \tau^{\frac{1}{2}}}\|\Box
 Z^{\le 2} w\|_{L^2L^2(\tC^U_\tau)}.
\end{align}
\end{corollary}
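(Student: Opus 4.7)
The approach is to apply the space-time Klainerman–Sobolev estimates \eqref{MTT_R} and \eqref{MTT_U} with $\partial w$ in place of $w$, and then reduce the resulting second-order derivative terms on the right to a combination of $\partial Z^{\le 3}w$ and $\Box Z^{\le 2}w$ by a Klainerman–Sideris-type identity that uses only the scaling vector field $S$ (together with \eqref{conjugate}).

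Applying \eqref{MTT_R} to $\partial w$ produces
\[\|\partial w\|_{L^\infty L^\infty(C^R_\tau)}\lesssim \frac{1}{\tau^{1/2}R^{3/2}}\|Z^{\le 2}\partial w\|_{L^2L^2(\tC^R_\tau)} + \frac{1}{\tau^{1/2}R^{1/2}}\|\partial_r Z^{\le 2}\partial w\|_{L^2L^2(\tC^R_\tau)}.\]
For the first term, the commutator formula \eqref{commutators} ($[Z,\partial]\in\mathrm{span}(\partial)$) gives $|Z^{\le 2}\partial w|\lesssim |\partial Z^{\le 2}w|\le |\partial Z^{\le 3}w|$, matching the first term on the right-hand side of \eqref{CRT}.

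For the second term, the key observation is that combining \eqref{S} with $\partial_r = \tfrac{1}{2}\bigl((\partial_t+\partial_r)-(\partial_t-\partial_r)\bigr)$ yields
\[(t-r)\partial_r = t(\partial_t+\partial_r) - S,\qquad (t+r)\partial_r = S - t(\partial_t-\partial_r).\]
Applied to $Z^{\le 2}\partial w$, the $S$-piece is immediately controlled by $\|\partial Z^{\le 3}w\|_{L^2L^2}$, while the remaining piece is a second derivative of $w$ weighted by $t$. To extract $\Box$ from this second derivative, one uses \eqref{conjugate}, $(\partial_t+\partial_r)(\partial_t-\partial_r)u = \Box u + \tfrac{2}{r}\partial_r u + \tfrac{\sDelta u}{r^2}$, together with the identities for $(\partial_t\pm\partial_r)^2u$ obtained by differentiating $2Su=(t+r)(\partial_t+\partial_r)u+(t-r)(\partial_t-\partial_r)u$. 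Applied to $u = Z^{\le 2}w$, this produces a pointwise Klainerman–Sideris-type bound
\[\la t-r\ra \, |\partial^2 Z^{\le 2}w| \lesssim |\partial Z^{\le 3}w| + \la t+r\ra \, |\Box Z^{\le 2}w| + \text{lower-order $r^{-1}$ terms}.\]
On $\tC^R_\tau$ we have $t\pm r\approx \tau$, so that $\|\partial_r Z^{\le 2}\partial w\|_{L^2L^2(\tC^R_\tau)}\lesssim \tau^{-1}\|\partial Z^{\le 3}w\|_{L^2L^2} + \|\Box Z^{\le 2}w\|_{L^2L^2}$; pairing with the weight $\tau^{-1/2}R^{-1/2}$ and using $\tau^{-1}\le R^{-1}$ delivers \eqref{CRT}. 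The proof of \eqref{CUT} is parallel, applying \eqref{MTT_U} and noting that $t-r\approx U$, $t+r\approx \tau$ on $\tC^U_\tau$, which is exactly what converts the KS bound into the weights $1/(U^{1/2}\tau^{3/2})$ and $1/(U^{1/2}\tau^{1/2})$. The final commutation of $Z^{\le 2}$ past $\Box$ on the right-hand side is harmless by $[\Box,\partial]=[\Box,\Omega]=0$ and $[\Box,S]=2\Box$.

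The main obstacle is the bookkeeping of the angular-Laplacian term $\sDelta u/r^2$ produced by \eqref{conjugate}: writing it as $\Omega^{\le 2}u/r^2$ would introduce two extra rotations on $v=Z^{\le 2}w$ and threaten to force $Z^{\le 4}w$ on the right-hand side. To preserve the sharper count $Z^{\le 3}w$, one uses \eqref{angCommutator} and \eqref{angBound} ($|\ang u|\le |Zu|/r$) to convert one angular derivative into a radial derivative carrying an extra factor of $r^{-1}$, which is then absorbed into the prefactor $\tau^{-1/2}R^{-1/2}$ (respectively $U^{1/2}\tau^{-3/2}$) via a Hardy estimate in the spirit of \eqref{Hardy}. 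This is the only delicate point; the remaining manipulations are routine applications of the commutator identities.
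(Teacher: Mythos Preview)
Your approach is correct and essentially the same as the paper's: apply \eqref{MTT_R}/\eqref{MTT_U} to $\partial w$, then use the scaling identity \eqref{S} (your rearrangement $(t\mp r)\partial_r = \pm t(\partial_t\pm\partial_r)\mp S$ is equivalent) together with \eqref{boxA} to trade second derivatives for $\Box$ plus $r^{-1}$--lower-order terms, exactly as in \eqref{minuscrt}--\eqref{drdsplit}.

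The one point that needs correcting is your resolution of the angular-Laplacian term. No Hardy inequality is involved, and nothing is converted into a \emph{radial} derivative. The paper simply uses \eqref{angBound} once to write
\[
|\ang\cdot\ang Z^{\le 2}w|\ \lesssim\ r^{-1}\,|Z\,\ang Z^{\le 2}w|\ \lesssim\ r^{-1}\,|\partial Z^{\le 3}w|,
\]
where one $\ang$ becomes $r^{-1}\Omega\subset r^{-1}Z$ and the remaining $\ang$ is kept as an ordinary $\partial$-derivative (with \eqref{commutators} to commute). On $\tC^R_\tau$ the factor $r^{-1}\approx R^{-1}$ combines with the prefactor $\tau^{-1/2}R^{-1/2}$ to give the first term on the right of \eqref{CRT}; on $\tC^U_\tau$ one has $r^{-1}\approx\tau^{-1}$, which likewise matches \eqref{CUT}. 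This is a purely pointwise step, not an integration-by-parts/Hardy argument.
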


\begin{proof}
  We apply \eqref{MTT_R} to see
  \begin{equation}\label{crt1}\|\partial w\|_{L^\infty
      L^\infty(C^R_\tau)} \lesssim \frac{1}{\tau^{\frac{1}{2}}R^{\frac{3}{2}}} 
 \|\partial Z^{\le 2} w\|_{L^2L^2(\tC^R_\tau)} +
 \frac{1}{\tau^{\frac{1}{2}}R^{\frac{1}{2}}}\|\partial_r \partial
 Z^{\le 2} w\|_{L^2L^2(\tC^R_\tau)}.
\end{equation}
We notice that
\begin{equation}\label{angcrt}\|\partial_r \ang Z^{\le 2} w\|_{L^2L^2(\tC^R_\tau)} \lesssim
  \frac{1}{R}\|\partial Z^{\le 3} w\|_{L^2L^2(\tC^R_\tau)}\end{equation}
follows from \eqref{angCommutator} and \eqref{angBound}.
Moreover, by applying \eqref{S} with $w$ replaced by
$(\partial_t-\partial_r)Z^{\le 2}w$ and $(\partial_t+\partial_r)Z^{\le
  2} w$ respectively, we obtain
\begin{equation}\label{minuscrt}\|(\partial_t-\partial_r)^2 Z^{\le 2} w\|_{L^2L^2(\tC^R_\tau)}
  \lesssim \frac{1}{R}\|\partial Z^{\le 3} w\|_{L^2L^2(\tC^R_\tau)} +
  \|(\partial_t^2-\partial_r^2) Z^{\le 2} w\|_{L^2L^2(\tC^R_\tau)},
\end{equation}
and
\begin{equation}\label{pluscrt}\|(\partial_t+\partial_r)^2 Z^{\le 2} w\|_{L^2L^2(\tC^R_\tau)}
  \lesssim \frac{1}{R}\|\partial Z^{\le 3} w\|_{L^2L^2(\tC^R_\tau)} +
  \|(\partial_t^2-\partial_r^2) Z^{\le 2}w\|_{L^2L^2(\tC^R_\tau)}.
\end{equation}

Observing that
\[|\partial_r \partial Z^{\le 2} w|\le |\partial_r \ang Z^{\le 2} w| +
  |\partial_r (\partial_t-\partial_r)Z^{\le 2} w| +
  |\partial_r(\partial_t+\partial_r) Z^{\le 2} w|\]
and subsequently writing $\partial_r =
\frac{1}{2}\Bigl[(\partial_t+\partial_r) -
(\partial_t-\partial_r)\Bigr]$ in the last two terms, we see that
\begin{multline}\label{drdsplit}
  \|\partial_r \partial Z^{\le 2}w\|_{L^2L^2(\tC^R_\tau)}
  \lesssim \|\partial_r \ang Z^{\le 2}w\|_{L^2L^2(\tC^R_\tau)} +
  \|(\partial_t-\partial_r)^2 Z^{\le 2} w\|_{L^2L^2(\tC^R_\tau)} \\+
  \|(\partial_t+\partial_r)^2 Z^{\le 2}w\|_{L^2L^2(\tC^R_\tau)} +
  \|(\partial_t^2-\partial_r^2)Z^{\le 2}w\|_{L^2L^2(\tC^R_\tau)}.
\end{multline}
Using this in \eqref{crt1} and the estimating via \eqref{angcrt}, \eqref{minuscrt}, and \eqref{pluscrt} gives
\begin{equation*}\|\partial w\|_{L^\infty L^\infty(C^R_\tau)} \lesssim \frac{1}{\tau^{\frac{1}{2}}R^{\frac{3}{2}}}
 \|\partial Z^{\le 3} w\|_{L^2L^2(\tC^R_\tau)} +
 \frac{1}{\tau^{\frac{1}{2}} R^{\frac{1}{2}}}\|(\partial_t^2-\partial_r^2)
 Z^{\le 2} w\|_{L^2L^2(\tC^R_\tau)}.
\end{equation*}
Relying upon
\begin{equation}\label{boxA}\partial_t^2-\partial_r^2 = \Box +\frac{2}{r}\partial_r +
\ang\cdot\ang
\end{equation}
and \eqref{angBound}
yields
\eqref{CRT}.

For \eqref{CUT}, we argue similarly.  By \eqref{angCommutator} and
\eqref{angBound} (applied to $\partial_r Z^{\le 2} w$), we obtain
\[\|\partial_r \ang Z^{\le 2} w\|_{L^2L^2(\tC^U_\tau)}
  \lesssim \frac{1}{\tau} \|\partial Z^{\le 3}
  w\|_{L^2L^2(\tC^U_\tau)}.\]
And using \eqref{S} with $w$ replaced by
$(\partial_t-\partial_r)Z^{\le 2}w$ and $(\partial_r+\partial_r)Z^{\le
  2} w$ respectively, we see 
that
\begin{align*}
\|(\partial_t-\partial_r)^2 Z^{\le 2}
  w\|_{L^2L^2(\tC^U_\tau)}&\lesssim \frac{1}{U}\|\partial Z^{\le
                            3}w\|_{L^2L^2(\tC^U_\tau)} +
                            \frac{\tau}{U}
                            \|(\partial_t^2-\partial_r^2) Z^{\le 2}
                            w\|_{L^2L^2(\tC^U_\tau)},\\
  \|(\partial_t+\partial_r)^2 Z^{\le 2}
  w\|_{L^2L^2(\tC^U_\tau)}&\lesssim \frac{1}{\tau}\|\partial Z^{\le
                            3}w\|_{L^2L^2(\tC^U_\tau)} +
                            \|(\partial_t^2-\partial_r^2) Z^{\le 2} w\|_{L^2L^2(\tC^U_\tau)}.
\end{align*}
Using these in \eqref{MTT_U} and the $\tC^U_\tau$ analog of \eqref{drdsplit}, in combination with \eqref{boxA} and
\eqref{angBound} as above, we see that 
\begin{align*}
  \|\partial w\|_{L^\infty L^\infty (C^U_\tau)} &\lesssim
                                         \frac{1}{U^{\frac{1}{2}}\tau^{\frac{3}{2}}}
                                         \|\partial Z^{\le 2}
                                         w\|_{L^2L^2(\tC^U_\tau)} +
                                         \frac{U^{\frac{1}{2}}}{\tau^{\frac{3}{2}}}
                                         \|\partial_r\partial
                                         Z^{\le 2}
                                         w\|_{L^2L^2(\tC^U_\tau)}\\
  &\lesssim  \frac{1}{U^{\frac{1}{2}}\tau^{\frac{3}{2}}}
                                         \|\partial Z^{\le 3}
                                         w\|_{L^2L^2(\tC^U_\tau)} +
                                         \frac{1}{U^{\frac{1}{2}} \tau^{\frac{1}{2}}}
                                         \|\Box
                                         Z^{\le 2}
                                         w\|_{L^2L^2(\tC^U_\tau)},
\end{align*}
which completes the proof.
\end{proof}

%%%%%%%%%%%%%%%%%%%%%%%%%%%%%%%%%%%%%%%%%%%%%%%%%%  
%%%%%%%%%%%%%%%%%%%%%%%%%%%%%%%%%%%%%%%%%%%%%%%%%%  
%%%%%%%%%%%%%%%%%%%%%%%%%%%%%%%%%%%%%%%%%%%%%%%%%%  

\section{Global Existence}
Here we provide the proof of Theorem~\ref{main_theorem}.  To do so, we
set  $u_0\equiv 0$, $v_0\equiv 0$ and recursively define $u_k, v_k$ to solve
\begin{equation}
  \label{iteration_equation}
  \begin{cases}
    \Box u_k = (\partial_t+\partial_r) u_{k-1} \,\partial_t v_{k-1}
-\partial_r u_{k-1}(\partial_t+\partial_r)v_{k-1}
    - \ang u_{k-1}\cdot\ang v_{k-1},\\ 
    \Box v_k = \partial_t u_{k-1} \,\partial_t v_{k-1},\\
     (u_k(0,\cd),\partial_tu_k(0,\cd))=(u_{(0)},u_{(1)}),\\
    (v_k(0,\cd),\partial_t v_k(0,\cd))=(v_{(0)},v_{(1)}).
  \end{cases}
\end{equation}
We will show that the sequences $(u_k)$ and $(v_k)$ converge.  The
limits yield the desired solutions $u, v$ to \eqref{main_equation}.

\subsection{Boundedness}  We fix $0<p<1$, $0<\delta < \min\Bigl(p,
1-p\Bigr)$, and $N$ large enough so that
$\frac{N}{2}+3\le N$.  We then 
set 
\begin{multline}
  \label{M}
  M_k =
  \|\la r\ra^{\frac{p-1}{2}} \good Z^{\le N}u_{k}\|_{L^2 L^2} +
  \|\la r\ra^{\frac{p-1}{2}} r^{-1} Z^{\le N} u_{k}\|_{L^2L^2}
+
  \|\la r\ra^{\frac{p-1}{2}} \good Z^{\le N} v_{k}\|_{L^2L^2} +
   \|\la
  r\ra^{\frac{p-1}{2}} r^{-1} Z^{\le N}v_{k}\|_{L^2L^2}
  \\+ \|Z^{\le N}u_{k}\|_{LE^1} + \|\partial Z^{\le
    N}u_{k}\|_{L^\infty L^2}
  + \|\la r\ra^{-\frac{1+\delta}{2}} \partial Z^{\le N}
  v_{k}\|_{L^2 L^2}
  + \|\la r \ra^{-\frac{\delta}{2}} \partial Z^{\le
    N} v_k\|_{L^\infty L^2} 
  \\+\sup_\tau \sup_{R\le \tau/2}
    \Bigl(\tau^{\frac{1}{2}}R \|\partial Z^{\le \frac{N}{2}}
    u_k\|_{L^\infty 
      L^\infty(C^R_\tau)}\Bigr)
  +\Bigl[\sum_\tau \sum_{R\le \tau/2}
    \Bigl(\tau^{\frac{1}{2}} R^{1-\frac{\delta}{2}} \|\partial Z^{\le \frac{N}{2}}
    v_k\|_{L^\infty 
      L^\infty(C^R_\tau)}\Bigr)^2\Bigr]^{\frac{1}{2}}
  \\+\sup_\tau \sup_{U\le \tau/4}
    \Bigl(\tau U^{\frac{1}{2}}\|\partial Z^{\le \frac{N}{2}}
    u_k\|_{L^\infty 
      L^\infty(C^U_\tau)}\Bigr)
  +\Bigl[\sum_\tau \sum_{U\le \tau/4}
    \Bigl(\tau^{1-\frac{\delta}{2}} U^{\frac{1}{2}}\|\partial Z^{\le \frac{N}{2}}
    v_k\|_{L^\infty 
      L^\infty(C^U_\tau)}\Bigr)^2\Bigr]^{\frac{1}{2}}.
\end{multline}
For any $k\ge 1$, we shall show that
\begin{equation}\label{MkGoal}M_k \le C_0\varepsilon + C M_{k-1}^2
\end{equation}
for some fixed constant $C_0$.  Provided that $\varepsilon>0$ is
sufficiently small, a straightforward induction argument then shows
that
\begin{equation}
  \label{MkBound}
  M_k\le 2 C_0\varepsilon
\end{equation}
for any $k$.

The product rule gives
\begin{equation}
  \label{product_rule_u}
  |Z^{\le N} \Box u_k| \lesssim |\partial Z^{\le\frac{N}{2}}  u_{k-1}|
  |\partial Z^{\le N} v_{k-1}| + |\partial Z^{\le N} u_{k-1}| |\partial Z^{\le
    \frac{N}{2}} v_{k-1}|.
\end{equation}
Hence,
\begin{multline}
  \label{boxu_crt}
  \|\la r\ra^{\frac{p+1}{2}}\Box Z^{\le N} u_k\|_{L^2L^2(C^R_\tau)}
  \lesssim   \tau^{-\frac{1}{2}} R^{\frac{p+\delta}{2}}\Bigl(\tau^{\frac{1}{2}}R \|\partial Z^{\le
    \frac{N}{2}} u_{k-1}\|_{L^\infty L^\infty(C^R_\tau)}\Bigr)
  \|\la r\ra^{-\frac{1+\delta}{2}} \partial Z^{\le N}
  v_{k-1}\|_{L^2L^2(C^R_\tau)}
  \\+ \tau^{-\frac{1}{2}}R^{\frac{p+\delta}{2}}\Bigl(\tau^{\frac{1}{2}}R^{1-\frac{\delta}{2}} \|\partial Z^{\le
    \frac{N}{2}} v_{k-1}\|_{L^\infty L^\infty(C^R_\tau)}\Bigr)
  \|Z^{\le N}
  u_{k-1}\|_{LE^1},
\end{multline}
and
\begin{multline}
  \label{boxu_cut}
 U^{\frac{1}{2}} \|\la r\ra^{\frac{p}{2}}\Box Z^{\le N} u_k\|_{L^2L^2(C^U_\tau)}
  \lesssim   \tau^{\frac{p-1+\delta}{2}}\Bigl(U^{\frac{1}{2}}\tau \|\partial Z^{\le
    \frac{N}{2}} u_{k-1}\|_{L^\infty L^\infty(C^U_\tau)}\Bigr)
  \|\la r\ra^{-\frac{1+\delta}{2}} \partial Z^{\le N}
  v_{k-1}\|_{L^2L^2(C^U_\tau)}
  \\+ \tau^{\frac{p-1+\delta}{2}}\Bigl(U^{\frac{1}{2}}\tau^{1-\frac{\delta}{2}} \|\partial Z^{\le
    \frac{N}{2}} v_{k-1}\|_{L^\infty L^\infty(C^U_\tau)}\Bigr)
  \|Z^{\le N}
  u_{k-1}\|_{LE^1}.
\end{multline}
From this, it follows that
\begin{equation}
  \label{summed_boxu}
  \Bigl(\sum_\tau \sum_{R\le \tau/2} \|\la r\ra^{\frac{1+p}{2}} \Box
  Z^{\le N}
u_k\|^2_{L^2L^2(C^R_\tau)}\Bigr)^{\frac{1}{2}}
+\sum_U \Bigl(\sum_{\tau \ge 4U} U \|\la
r\ra^{\frac{p}{2}} \Box Z^{\le N} u_k\|^2_{L^2L^2(C^U_\tau)}\Bigr)^{\frac{1}{2}}
\lesssim M_{k-1}^2.
\end{equation}
By \eqref{MR} and \eqref{smallness}, along with a Hardy inequality, we get
\begin{equation}\label{Ik}
   \|\la r\ra^{\frac{p-1}{2}} \good Z^{\le N}u_{k}\|_{L^2  L^2} +
  \|\la r\ra^{\frac{p-1}{2}}r^{-1} Z^{\le N} u_{k}\|_{L^2 L^2} \lesssim
                                                              \varepsilon
                                                              +  M_{k-1}^2.
\end{equation}

As the above argument does not rely on the null structure of $\Box
u_k$ and we also have
\begin{equation}
  \label{product_rule_v}
  |Z^{\le N} \Box v_k| \lesssim |\partial Z^{\le\frac{N}{2}} u_{k-1}|
  |\partial Z^{\le N} v_{k-1}| + |\partial Z^{\le N} u_{k-1}|
  |\partial Z^{\le
    \frac{N}{2}} v_{k-1}|,
\end{equation}
the same arguments show that
\begin{equation}
  \label{IIIk}
  \Bigl(\sum_\tau \sum_{R\le \tau/2} \|\la r\ra^{\frac{1+p}{2}} \Box
  Z^{\le N}
v_k\|^2_{L^2L^2(C^R_\tau)}\Bigr)^{\frac{1}{2}}
+\sum_U \Bigl(\sum_{\tau \ge 4U} U \|\la
r\ra^{\frac{p}{2}} \Box Z^{\le N} v_k\|^2_{L^2L^2(C^U_\tau)}\Bigr)^{\frac{1}{2}}
\lesssim M_{k-1}^2,
\end{equation}
which we shall use later.

It is in the process of bounding $\|Z^{\le N} u_k\|_{LE^1}$ and $\|\partial Z^{\le N}
u_k\|_{L^\infty L^2}$ that we will need the null condition.  A finer
alternative to \eqref{product_rule_u} that takes care with the good
directions is
% \blue{\begin{multline}
%   \label{product_rule_u_nulla}
%   |\Box Z^{\le N} u_k| \lesssim |\partial Z^{\le \frac{N}{2}} v_{k-1}|
%   \Bigl(|\good Z^{\le N} u_{k-1}| + r^{-1}|Z^{\le N} u_{k-1}|\Bigr)
%   +\Bigl(|\good Z^{\le \frac{N}{2}} u_{k-1}| + r^{-1}|Z^{\le
%     \frac{N}{2}} u_{k-1}|\Bigr) |\partial Z^{\le N} v_{k-1}|
%   \\+|\partial Z^{\le \frac{N}{2}} u_{k-1}|
%   \Bigl(|\good Z^{\le N} v_{k-1}| + r^{-1}|Z^{\le N} v_{k-1}|\Bigr)
%   +\Bigl(|\good Z^{\le \frac{N}{2}} v_{k-1}| + r^{-1}|Z^{\le
%     \frac{N}{2}} v_{k-1}|\Bigr) |\partial Z^{\le N} u_{k-1}|.
% \end{multline}
% In addition to the product rule, this relies on \eqref{commutators}.}
\begin{multline}
  \label{product_rule_u_null}
  |\Box Z^{\le N} u_k| \lesssim |Z^{\le \frac{N}{2}} \partial
  v_{k-1}|
  |Z^{\le N} \good u_{k-1}| 
  +|Z^{\le \frac{N}{2}} \good u_{k-1}| |Z^{\le N} \partial v_{k-1}|
  \\+|Z^{\le \frac{N}{2}}\partial u_{k-1}|
  |Z^{\le N} \good v_{k-1}| 
  +| Z^{\le \frac{N}{2}} \good v_{k-1}| |Z^{\le N} \partial u_{k-1}|.
\end{multline}

We need to consider
\[\int_0^\infty \|Z^{\le N} \Box u_k(s,\cd)\|_{L^2}\,ds \le \sum_{j\ge
    0} \int_0^\infty \|Z^{\le N} \Box u_k(s,\cd)\|_{L^2(A_{2^j})}\,ds.\]
To each lower order term in \eqref{product_rule_u_null}, we apply
\eqref{weightedSob} to see that this is
\begin{multline*}
 \lesssim \sum_{j\ge 0} \int_0^\infty 2^{-\frac{p-\delta}{2}j}\|\la
  r\ra^{\frac{p-1}{2}}Z^{\le N}\good 
  u_{k-1}(s,\cd)\|_{L^2(A_{2^j})} \|\la
  r\ra^{-\frac{1+\delta}{2}} Z^{\le \frac{N}{2}+2} \partial v_{k-1}(s,\cd) \|_{L^2(\tilde{A}_{2^j})}
  \,ds
\\+\sum_{j\ge 0} \int_0^\infty 2^{-\frac{p-\delta}{2}j}\|\la
  r\ra^{\frac{p-1}{2}}Z^{\le \frac{N}{2}+2}\good 
  u_{k-1}(s,\cd)\|_{L^2(\tilde{A}_{2^j})} \|\la
  r\ra^{-\frac{1+\delta}{2}} Z^{\le N} \partial v_{k-1}(s,\cd) \|_{L^2(A_{2^j})}
  \,ds
  \\+\sum_{j\ge 0} \int_0^\infty 2^{-\frac{p}{2}j}\|\la
  r\ra^{-\frac{1}{2}} Z^{\le \frac{N}{2}+2}\partial 
  u_{k-1}(s,\cd)\|_{L^2(\tilde{A}_{2^j})} \|\la
  r\ra^{\frac{p-1}{2}}Z^{\le N} \good v_{k-1}(s,\cd) \|_{L^2(A_{2^j})}
  \,ds
  \\+\sum_{j\ge 0} \int_0^\infty 2^{-\frac{p}{2}j}\|\la
  r\ra^{-\frac{1}{2}} Z^{\le N} \partial
  u_{k-1}(s,\cd)\|_{L^2(A_{2^j})} \|\la
  r\ra^{\frac{p-1}{2}} Z^{\le \frac{N}{2}+2}\good v_{k-1}(s,\cd) \|_{L^2(\tilde{A}_{2^j})}
  \,ds.
\end{multline*}
By the Schwarz inequality and \eqref{commutators}, this is
\begin{multline*}
  \lesssim \Bigl(\|\la r\ra^{\frac{p-1}{2}}\good Z^{\le N}
  u_{k-1}\|_{L^2L^2} + \|\la r\ra^{\frac{p-1}{2}} r^{-1} Z^{\le N}
  u_{k-1}\|_{L^2L^2}\Bigr) \|\la r\ra^{-\frac{1+\delta}{2}} \partial
  Z^{\le N}v_{k-1}\|_{L^2L^2} \\+ \|Z^{\le N}
  u_{k-1}\|_{LE^1}\Bigl(\|\la r\ra^{\frac{p-1}{2}} \good Z^{\le N}
  v_{k-1}\|_{L^2L^2} + \|\la r\ra^{\frac{p-1}{2}} r^{-1} Z^{\le N}
  v_{k-1}\|_{L^2L^2}\Bigr)
\end{multline*}
provided $\frac{N}{2}+2\le N$, which is clearly $\O(M_{k-1}^2)$ as desired.
Hence, due to \eqref{LE1} and \eqref{smallness}, we have shown
\begin{equation}
  \label{Vk}
  \|Z^{\le N}u_k\|_{LE^1} + \|\partial Z^{\le N}u_k\|_{L^\infty L^2}
  \lesssim \varepsilon + M_{k-1}^2.
\end{equation}

In order to address the $v_k$ terms,
we next consider
\[\|\la r\ra^{\frac{1-\delta}{2}}Z^{\le N}\Box v_k\|_{L^2L^2}.
\]
To the lower order factors in \eqref{product_rule_v} we apply
\eqref{weightedSob} to see that this is
\[
  \lesssim \|\partial Z^{\le \frac{N}{2}+2} u_{k-1}\|_{L^\infty L^2}
  \|\la r\ra^{-\frac{1+\delta}{2}} \partial Z^{\le N}
  v_{k-1}\|_{L^2L^2}
    + \|\partial Z^{\le N} u_{k-1}\|_{L^\infty L^2}
  \|\la r\ra^{-\frac{1+\delta}{2}} \partial Z^{\le \frac{N}{2}+2}
  v_{k-1}\|_{L^2L^2}.\]
Since $\frac{N}{2}+2\le N$, this is $\O(M_{k-1}^2)$.  When combined
with \eqref{smallness}, \eqref{newLE}, \eqref{IIIk}, and the
observation that
\[\|\la
  r\ra^{-\frac{\delta}{2}} \partial Z^{\le N} v_k\|_{L^\infty L^2}
\lesssim \|\la
  r\ra^{-\frac{\delta}{2}} (\partial_t-\partial_r) Z^{\le N} v_k\|_{L^\infty L^2} + \|\la
  r\ra^{\frac{p}{2}} \good Z^{\le N} v_k\|_{L^\infty L^2},
  \]
this gives
\begin{multline}
  \label{VIk}
  \|\la r\ra^{\frac{p-1}{2}}\good Z^{\le N}v_k\|_{L^2L^2} + \|\la
  r\ra^{\frac{p-1}{2}} r^{-1}Z^{\le N} v_k\|_{L^2L^2} + \|\la
  r\ra^{-\frac{1+\delta}{2}} \partial Z^{\le N} v_k\|_{L^2L^2} \\+ \|\la
  r\ra^{-\frac{\delta}{2}} \partial Z^{\le N} v_k\|_{L^\infty L^2}
  \lesssim \varepsilon + M_{k-1}^2.
\end{multline}

In order to show \eqref{MkGoal}, it remains to bound the $L^\infty
L^\infty$ terms in \eqref{M}.
%
%
% \begin{multline*}
% \|Z^{\le N} \Box u_k\|_{L^2L^2(\tC^R_\tau)} \lesssim
%   R^{-\frac{1-\delta}{2}}\|\partial
%   Z^{\le \frac{N}{2}+2} u_{k-1}\|_{L^\infty L^2} \|\la
%   r\ra^{-\frac{1+\delta}{2}} \partial Z^{\le N}
%   v_{k-1}\|_{L^2L^2(\tC^R_\tau)}
%  \\ + R^{-\frac{1-\delta}{2}} \|\la r\ra^{-\frac{1+\delta}{2}} \partial
%   Z^{\le \frac{N}{2}+2} v_{k-1}\|_{L^2L^2(\ttC^R_\tau)} \|\partial
%   Z^{\le N} u_{k-1}\|_{L^\infty L^2}.
% \end{multline*}
%
%
Applying \eqref{weightedSob} to each lower order piece in
\eqref{product_rule_u_null}, we see that 
\begin{multline*}
 R^{\frac{1}{2}} \|Z^{\le N} \Box u_k\|_{L^2L^2(\tC^R_\tau)} \lesssim R^{\frac{\delta-p}{2}}
  \|\la r\ra^{-\frac{\delta}{2}} Z^{\le \frac{N}{2}+2}\partial
  v_{k-1}\|_{L^\infty L^2} \|\la r\ra^{\frac{p-1}{2}} Z^{\le
    N} \good u_{k-1}\|_{L^2L^2}
\\+ R^{\frac{\delta-p}{2}} \|\la r\ra^{\frac{p-1}{2}} Z^{\le \frac{N}{2}+2} \good
u_{k-1}\|_{L^2L^2} \|\la r\ra^{-\frac{\delta}{2}} Z^{\le
  N} \partial v_{k-1}\|_{L^\infty L^2}
\\+ R^{-\frac{p}{2}} \|Z^{\le \frac{N}{2}+2} \partial
u_{k-1}\|_{L^\infty L^2} \|\la r\ra^{\frac{p-1}{2}} Z^{\le N}\good
v_{k-1}\|_{L^2L^2}
\\+ R^{-\frac{p}{2}} \|Z^{\le N} \partial
u_{k-1}\|_{L^\infty L^2} \|\la r\ra^{\frac{p-1}{2}} Z^{\le \frac{N}{2}+2}\good v_{k-1}\|_{L^2L^2}.
\end{multline*}
And thus, by \eqref{CRT} and the facts that $\frac{N}{2}+2\le N$ and
$0<\delta<p$,
\begin{multline*}
  \tau^{\frac{1}{2}} R \|\partial Z^{\le \frac{N}{2}}
  u_k\|_{L^\infty L^\infty(C^R_\tau)}
  \lesssim \|\la
  r\ra^{-\frac{1}{2}} \partial Z^{\le \frac{N}{2}+3}
    u_k\|_{L^2L^2(\tC^R_\tau)}
    \\
 +\|\la r\ra^{-\frac{\delta}{2}} Z^{\le N}\partial
  v_{k-1}\|_{L^\infty L^2} \|\la r\ra^{\frac{p-1}{2}} Z^{\le
    N} \good u_{k-1}\|_{L^2L^2}
\\+ \|Z^{\le N} \partial
u_{k-1}\|_{L^\infty L^2} \|\la r\ra^{\frac{p-1}{2}} Z^{\le N}\good
v_{k-1}\|_{L^2L^2}
\end{multline*}
 for $1\le R\le \tau/2$.
When this is combined with \eqref{commutators} and \eqref{Vk} it yields that
\begin{equation}
  \label{VIIIk}
  \sup_\tau \sup_{R\le \tau/2}
    \Bigl(\tau^{\frac{1}{2}} R \|\partial Z^{\le \frac{N}{2}}
    u_k\|_{L^\infty 
      L^\infty(C^R_\tau)}\Bigr) \lesssim
    \varepsilon + M_{k-1}^2.
  \end{equation}
  
Similarly using \eqref{weightedSob} and \eqref{commutators} in \eqref{product_rule_v} instead
gives
 \begin{multline*}
\|Z^{\le N} \Box v_k\|_{L^2L^2(\tC^R_\tau)} \lesssim
  R^{-\frac{1-\delta}{2}}\|\partial
  Z^{\le \frac{N}{2}+2} u_{k-1}\|_{L^\infty L^2} \|\la
  r\ra^{-\frac{1+\delta}{2}} \partial Z^{\le N}
  v_{k-1}\|_{L^2L^2(\tC^R_\tau)}
 \\ + R^{-\frac{1-\delta}{2}} \|\la r\ra^{-\frac{1+\delta}{2}} \partial
  Z^{\le \frac{N}{2}+2} v_{k-1}\|_{L^2L^2(\ttC^R_\tau)} \|\partial
  Z^{\le N} u_{k-1}\|_{L^\infty L^2}.
\end{multline*}
When combined with \eqref{CRT}, this yields
\begin{multline*}
  \tau^{\frac{1}{2}}R^{1-\frac{\delta}{2}} \|\partial Z^{\le \frac{N}{2}}
  v_k\|_{L^\infty L^\infty(C^R_\tau)}
  \lesssim \|\la r\ra^{-\frac{1+\delta}{2}}Z^{\le \frac{N}{2}+3}
    v_k\|_{L^2L^2(\tC^R_\tau)}
   \\ + \|\partial Z^{\le \frac{N}{2}+2} u_{k-1}\|_{L^\infty L^2} \|\la
  r\ra^{-\frac{1+\delta}{2}} \partial Z^{\le N}
  v_{k-1}\|_{L^2L^2(\tC^R_\tau)}
  \\+ \|\la r\ra^{-\frac{1+\delta}{2}} \partial
  Z^{\le \frac{N}{2}+2} v_{k-1}\|_{L^2L^2(\ttC^R_\tau)} \|\partial
  Z^{\le N} u_{k-1}\|_{L^\infty L^2},
\end{multline*}
which upon pairing with \eqref{VIk} gives
\begin{equation}
  \label{IXk}
  \Bigl[\sum_\tau \sum_{R\le \tau/4}
    \Bigl(R^{\frac{3-\delta}{2}} \|\partial Z^{\le \frac{N}{2}}
    v_k\|_{L^\infty 
      L^\infty(C^R_\tau)}\Bigr)^2\Bigr]^{\frac{1}{2}}
\lesssim \varepsilon + M_{k-1}^2.
\end{equation}

Using \eqref{CUT} in place of \eqref{CRT}, these same arguments show
\begin{multline*}
  U^{\frac{1}{2}}\tau \|\partial Z^{\le \frac{N}{2}}
  u_k\|_{L^\infty L^\infty(C^U_\tau)}
  \lesssim \|\la r\ra^{-\frac{1}{2}}
  Z^{\le \frac{N}{2}+3}
  u_k\|_{L^2L^2(\tC^U_\tau)}
 %  \\  + \|\partial Z^{\frac{N}{2}+2} u_{k-1}\|_{L^\infty L^2} \|\la
 %  r\ra^{-\frac{1+\delta}{2}} \partial Z^{\le N}
 %  v_{k-1}\|_{L^2L^2(\tC^U_\tau)}
 % \\ + \|\la r\ra^{-\frac{1+\delta}{2}} \partial
 %  Z^{\le \frac{N}{2}+2} v_{k-1}\|_{L^2L^2(\ttC^U_\tau)} \|\partial
 %  Z^{\le N} u_{k-1}\|_{L^\infty L^2}
 \\  +\|\la r\ra^{-\frac{\delta}{2}} Z^{\le N}\partial
  v_{k-1}\|_{L^\infty L^2} \|\la r\ra^{\frac{p-1}{2}} Z^{\le
    N} \good u_{k-1}\|_{L^2L^2}
\\+ \|Z^{\le N} \partial
u_{k-1}\|_{L^\infty L^2} \|\la r\ra^{\frac{p-1}{2}} Z^{\le N}\good
v_{k-1}\|_{L^2L^2},
\end{multline*}
and
\begin{multline*}
  U^{\frac{1}{2}}\tau^{1-\frac{\delta}{2}} \|\partial Z^{\le \frac{N}{2}}
  v_k\|_{L^\infty L^\infty(C^U_\tau)}
  \lesssim \|\la r\ra^{-\frac{1+\delta}{2}}
  Z^{\le \frac{N}{2}+3}
    v_k\|_{L^2L^2(\tC^U_\tau)}
  \\  + \|\partial Z^{\frac{N}{2}+2} u_{k-1}\|_{L^\infty L^2} \|\la
  r\ra^{-\frac{1+\delta}{2}} \partial Z^{\le N}
  v_{k-1}\|_{L^2L^2(\tC^U_\tau)}
 \\ + \|\la r\ra^{-\frac{1+\delta}{2}} \partial
  Z^{\le \frac{N}{2}+2} v_{k-1}\|_{L^2L^2(\ttC^U_\tau)} \|\partial
  Z^{\le N} u_{k-1}\|_{L^\infty L^2}.
\end{multline*}
When these are combined with \eqref{Vk} and \eqref{VIk} respectively,
we obtain
\begin{equation}
  \label{Xk}
 \sup_\tau \sup_{U\le \tau/4}
    \Bigl(\tau U^{\frac{1}{2}}\|\partial Z^{\le \frac{N}{2}}
    u_k\|_{L^\infty 
      L^\infty(C^U_\tau)}\Bigr)
  +\Bigl[\sum_\tau \sum_{U\le \tau/4}
    \Bigl(\tau^{1-\frac{\delta}{2}} U^{\frac{1}{2}}\|\partial Z^{\le \frac{N}{2}}
    v_k\|_{L^\infty 
      L^\infty(C^U_\tau)}\Bigr)^2\Bigr]^{\frac{1}{2}} \lesssim \varepsilon+M_{k-1}^2.
\end{equation}

The combination of \eqref{Ik}, \eqref{Vk}, \eqref{VIk}, \eqref{VIIIk},
\eqref{IXk}, and \eqref{Xk} prove \eqref{MkGoal} and, hence,
\eqref{MkBound} as desired.

%%%%%%%%%%%%%%%%%%%%%%%%%%%%%%%%%%%%%%%%%%%%%%%%%%
  \subsection{Convergence}
It remains to show that the sequence $(u_k)$ and $(v_k)$ converge.  We
set  
\begin{multline}
  \label{A}
  A_k =
  \|\la r\ra^{\frac{p-1}{2}} \good Z^{\le N}(u_{k}-u_{k-1})\|_{L^2 L^2} +
  \|\la r\ra^{\frac{p-1}{2}} r^{-1} Z^{\le N} (u_{k}-u_{k-1})\|_{L^2L^2}
+
  \|\la r\ra^{\frac{p-1}{2}} \good Z^{\le N} (v_{k}-v_{k-1})\|_{L^2L^2} \\+
   \|\la
  r\ra^{\frac{p-1}{2}} r^{-1} Z^{\le N}(v_{k}-v_{k-1})\|_{L^2L^2}
  + \|Z^{\le N}(u_{k}-u_{k-1})\|_{LE^1} + \|\partial Z^{\le
    N}(u_{k}-u_{k-1})\|_{L^\infty L^2}
  \\+ \|\la r\ra^{-\frac{1+\delta}{2}} \partial Z^{\le N}
  (v_{k}-v_{k-1})\|_{L^2 L^2}
  +\sup_\tau \sup_{R\le \tau/2}
    \Bigl(\tau^{\frac{1}{2}} R \|\partial Z^{\le \frac{N}{2}}
    (u_k-u_{k-1})\|_{L^\infty 
      L^\infty(C^R_\tau)}\Bigr)
  \\+\Bigl[\sum_\tau \sum_{R\le \tau/2}
    \Bigl(R^{\frac{3-\delta}{2}} \|\partial Z^{\le \frac{N}{2}}
    (v_k-v_{k-1})\|_{L^\infty 
      L^\infty(C^R_\tau)}\Bigr)^2\Bigr]^{\frac{1}{2}}
 +\sup_\tau \sup_{U\le \tau/4}
    \Bigl(\tau U^{\frac{1}{2}}\|\partial Z^{\le \frac{N}{2}}
    (u_k-u_{k-1})\|_{L^\infty 
      L^\infty(C^U_\tau)}\Bigr)
\\+\Bigl[\sum_\tau \sum_{U\le \tau/4}
    \Bigl(\tau^{1-\frac{\delta}{2}} U^{\frac{1}{2}}\|\partial Z^{\le \frac{N}{2}}
    (v_k-v_{k-1})\|_{L^\infty 
      L^\infty(C^U_\tau)}\Bigr)^2\Bigr]^{\frac{1}{2}}.
\end{multline}
We seek to show that
\begin{equation}
  \label{AkGoal}
  A_k \le \frac{1}{2}A_{k-1},
\end{equation}
which implies that the sequences are Cauchy and thus convergent.

We note that
\begin{multline}
  \label{product_rule_u_difference}
  |Z^{\le N} \Box (u_k-u_{k-1})|\lesssim |\partial Z^{\le \frac{N}{2}}
  (u_{k-1}-u_{k-2})| |\partial Z^{\le N} v_{k-1}| + |\partial Z^{\le
    N} (u_{k-1}-u_{k-2})||\partial Z^{\le \frac{N}{2}}v_{k-1}| 
\\+ |\partial Z^{\le \frac{N}{2}} u_{k-2}| |\partial Z^{\le N}
(v_{k-1}-v_{k-2})|
+ |\partial Z^{\le N} u_{k-2}| |\partial Z^{\le \frac{N}{2}}(v_{k-1}-v_{k-2})|,
\end{multline}
\begin{multline}
  \label{product_rule_v_difference}
  |Z^{\le N} \Box (v_k-v_{k-1})|\lesssim |\partial Z^{\le \frac{N}{2}}
  (u_{k-1}-u_{k-2})| |\partial Z^{\le N} v_{k-1}| + |\partial Z^{\le
    N} (u_{k-1}-u_{k-2})||\partial Z^{\le \frac{N}{2}}v_{k-1}| 
\\+ |\partial Z^{\le \frac{N}{2}} u_{k-2}| |\partial Z^{\le N}
(v_{k-1}-v_{k-2})|
+ |\partial Z^{\le N} u_{k-2}| |\partial Z^{\le \frac{N}{2}}(v_{k-1}-v_{k-2})|,
\end{multline}
and
\begin{multline}
  \label{product_rule_u_null_difference}
  |\Box Z^{\le N} (u_k-u_{k-1})| \lesssim |Z^{\le \frac{N}{2}} \partial v_{k-1}|
  |Z^{\le N}\good (u_{k-1}-u_{k-2})| + |Z^{\le N} \partial v_{k-1}|
  |Z^{\le \frac{N}{2}}\good (u_{k-1}-u_{k-2})|
\\+ |Z^{\le \frac{N}{2}}\good u_{k-2}| |Z^{\le N} \partial
(v_{k-1}-v_{k-2})|
+ |Z^{\le N} \good u_{k-2}| |Z^{\le
  \frac{N}{2}}\partial(v_{k-1}-v_{k-2})|
\\+|Z^{\le \frac{N}{2}} \good v_{k-1}|
  |Z^{\le N}\partial (u_{k-1}-u_{k-2})| + |Z^{\le N} \good v_{k-1}|
  |Z^{\le \frac{N}{2}}\partial (u_{k-1}-u_{k-2})|
\\+|Z^{\le \frac{N}{2}} \good (v_{k-1}-v_{k-2})|
  |Z^{\le N}\partial u_{k-2}| + |Z^{\le N} \good (v_{k-1}-v_{k-2})|
  |Z^{\le \frac{N}{2}}\partial u_{k-2}|,
\end{multline}
which will be used in place of \eqref{product_rule_u},
\eqref{product_rule_v}, and \eqref{product_rule_u_null}
respectively.  Arguing as in the proof of \eqref{MkGoal} then shows
that
\[A_k \lesssim (M_{k-1}+M_{k-2}) A_{k-1}.\]
Provided that $\varepsilon$ is sufficiently small, an application of
\eqref{MkBound} immediately yields \eqref{AkGoal} and completes the proof.
 
We end with a brief remark about the asymptotics of the solution.  The
solution $u$ is also bounded in the norms given by \eqref{M}.  Indeed,
by examining the last two terms, one can immediately observe that $u$
as more rapid asymptotic decay $\O(t^{-1})$ than the component $v$,
which instead is $\O(t^{-1+\frac{\delta}{2}})$.

%%%%%%%%%%%%%%%%%%%%%%%%%%%%%%%%%%%%%%%%%%%%%%%%%%
%%%%%%%%%%%%%%%%%%%%%%%%%%%%%%%%%%%%%%%%%%%%%%%%%%
\bibliography{exterior}

\end{document}